\def\Stars{\mathrm{Stars}}
\def\aaa{\mathbf{a}}
\def\bb{\mathbf{b}}
\def\pp{\mathbf{p}}
\def\qq{\mathbf{q}}
\def\ss{\mathbf{s}}
\def\vv{\mathbf{v}}
\def\xx{\mathbf{x}}
\def\nn{\mathbf{n}}
\def\ee{\mathbf{e}}
\def\ff{\mathbf{f}}
\def\ii{\mathbf{i}}
\def\PPP{\mathcal{P}}
\def\TTT{\mathcal{T}}
\def\00{\mathbf{0}}
\def\nn{\mathbf{n}}
\def\NN{\mathbb{N}}
\def\RR{\mathbb{R}}
\def\ZZ{\mathbb{Z}}
\newtheorem{theorem}{Theorem}
\newtheorem{lemma}[theorem]{Lemma}
\newtheorem{corollary}[theorem]{Corollary}
\theoremstyle{definition}
\newtheorem{example}[theorem]{Example}
\theoremstyle{remark}
\title[Divided Differences of Multivariate Implicit Functions]{Divided Differences\\ of Multivariate Implicit Functions}
\author{Georg Muntingh}
\address{CMA / Department of Mathematics, University of Oslo, P.O. Box 1053, Blindern, N-0316, Oslo, Norway}
\email{georgmu@math.uio.no}
\begin{document}

\maketitle

\begin{abstract}
\noindent Under general conditions, the equation $g(x^1,\ldots,x^q,y) = 0$ implicitly defines $y$
locally as a function of $x^1,\ldots,x^q$. In this article, we express divided differences of
$y$ in terms of divided differences of $g$, generalizing a recent formula for the case where $y$ is univariate.
The formula involves a sum over a combinatorial structure whose elements can be viewed either as polygonal partitions
or as plane trees. Through this connection we prove as a corollary a formula for derivatives of $y$ in terms of derivatives of $g$.
\end{abstract}


\section{Introduction}\label{sec:Introduction}
Divided differences can be viewed as a discrete analogue of derivatives and are commonly used in approximation theory, see \cite{Boor05} for a survey.

Recently, Floater and Lyche introduced a multivariate chain rule for divided differences \cite{Floater10}, analogous to a multivariate form of Fa\'a di Bruno's formula for derivatives \cite{FaaDiBruno57, Johnson02, ConstantineSavits96}. In Theorem 1 in \cite{MuntinghFloater11}, this chain rule was applied to find an expression for divided differences of univariate implicit functions, thereby generalizing a formula by Floater and Lyche for divided differences of the inverse of a function \cite{FloaterLyche08}.

In Theorem \ref{thm:MainTheorem}, the Main Theorem of this paper, we generalize Theorem 1 in \cite{MuntinghFloater11} to divided differences of multivariate implicit functions. More precisely, for some open box $U\subset\RR^q$ and open interval $V\subset\RR$, let $y:U\longrightarrow V$ be a function that is implicitly defined by a function $g: U\times V\longrightarrow \RR$ via
\begin{equation}\label{eq:ImplicitlyDefined}
g\big(\xx,y(\xx)\big) = 0,\qquad
\frac{\partial g}{\partial y}\big(\xx,y(\xx)\big)\neq 0
\qquad \forall\ \xx\in U.
\end{equation}
Then the Main Theorem states that, for any rectangular grid
\[ \big\{x^1_0,\ldots,x^1_{n_1}\big\} \times \cdots \times \big\{x^q_0,\ldots,x^q_{n_q}\big\} \subset U,\]
we can express the divided difference $\big[x^1_0,\ldots,x^1_{n_1};\cdots;x^q_0,\ldots,x^q_{n_q} \big]y$ as a sum of terms involving the divided differences of $g$.

In the next section, we define these divided differences and explain our notation. In Section \ref{sec:RecursiveFormula}, we apply the multivariate chain rule to derive a formula that recursively expresses divided differences of $y$ in terms of divided differences of $g$ and lower-order divided differences of $y$. This recursive formula is ``solved'' in Section \ref{sec:Formula}, in the sense that it is used to derive a closed-form nonrecursive formula that expresses divided differences of $y$ solely in terms of divided differences of $g$. This formula is stated in the Main Theorem as a sum over polygonal partitions. It is shown in Section \ref{sec:PartitionsAndTrees}, that such polygonal partitions correspond to plane trees of a certain type, giving rise to an alternative form of the Main Theorem. Switching between these combinatorial structures, we are able to prove as a special case in Section \ref{sec:ImplicitDerivatives} a generalization of a formula by Comtet, Fiolet, and Wilde for the \emph{derivatives} of $y$ in terms of the derivatives of $g$.

\section{Divided differences}\label{sec:DividedDifferences}
Consider a function $y: U\longrightarrow \RR$ defined on some open box
\begin{equation}\label{eq:U}
U = (a_1,b_1)\times \cdots \times (a_q,b_q)\subset \RR^q.
\end{equation}
Suppose that, for some integers $n_1,\ldots, n_q \geq 0$ and all $j = 1,\ldots, q$, we are given points $x^j_0, \ldots, x^j_{n_j} \in (a_j, b_j)$ satisfying $a_j < x^j_0 < \cdots < x^j_{n_j} < b_j$. The Cartesian product 
\begin{equation}\label{eq:Grid}
\big\{ x^1_0, \ldots, x^1_{n_1} \big\} \times  \cdots \times \big\{ x^q_0, \ldots, x^q_{n_q} \big\}
\end{equation}
defines a \emph{[rectangular] grid} of points in $U$. The \emph{divided difference} of $y$ at this grid, denoted by
\begin{equation}\label{eq:mvdd}
 \big[x^1_0, \ldots, x^1_{n_1};\ \cdots ;x^q_0, \ldots, x^q_{n_q}\big]y,
\end{equation}
can be defined recursively as follows. If $n_1 = \cdots = n_q = 0$, the grid consists of only one point
$(x^1_0,\ldots,x^q_0)$, and we define $\big[x^1_0;\cdots ;x^q_0\big]y := y(x^1_0,\ldots,x^q_0)$
as the value of $y$ at this point. In case $n_j > 0$ for some $1\leq j\leq q$, we can define (\ref{eq:mvdd}) recursively by
\begin{equation}\label{eq:mvdd-2}
(x^j_{n_j} - x^j_0) \big[x^1_0, \ldots, x^1_{n_1};\ \cdots ;x^q_0, \ldots, x^q_{n_q}\big]y =
\end{equation}
\[ \qquad \big[x^1_0, \ldots, x^1_{n_1};\ \cdots ; \widehat{x^j_0}, x^j_1, \ldots, x^j_{n_j-1},          x^j_{n_j};\  \cdots ;x^q_0, \ldots, x^q_{n_q}\big]y - \]
\[ \qquad \big[x^1_0, \ldots, x^1_{n_1};\ \cdots ;          x^j_0 , x^j_1, \ldots, x^j_{n_j-1}, \widehat{x^j_{n_j}};\ \cdots ;x^q_0, \ldots, x^q_{n_q}\big]y,\ \, \]
where the hat signifies omission of a symbol.
If several of the $n_j$ are greater than zero, the divided difference (\ref{eq:mvdd}) is uniquely defined by any of these 
recursive formulas. We refer to the dimensions $(n_1, \ldots, n_q)$ of the grid as the \emph{order} of the divided difference in Equation \ref{eq:mvdd}.

For any $\aaa = (a_1,\ldots,a_q), \bb = (b_1,\ldots,b_q) \in \NN^q$, write $\aaa\leq \bb$ whenever $a_j\leq b_j$ for every $1\leq j\leq q$. Additionally, we write $\aaa < \bb$ whenever $\aaa\leq \bb$ and $\aaa\neq \bb$. In this manner, the symbol $\leq$ defines a partial order on $\NN^q$. We use the notation
\[ [\xx:\aaa,\bb]y := \big[x^1_{a_1},x^1_{a_1 + 1},\ldots,x^1_{b_1};\ \cdots\ ; x^q_{a_q},x^q_{a_q + 1},\ldots, x^q_{b_q}\big]y
\]
for the divided difference of $y$ with respect to the grid of all points with indices ``between $\aaa$ and $\bb$''.

Divided differences of the function $g:U\times V\longrightarrow \RR$ in Equation \ref{eq:ImplicitlyDefined} are defined similarly. For these functions, however, we stress the distinction between the variables $x^1,\ldots,x^q$ and the variable $y$ by replacing the final semi-colon by a bar in our notation.

As the notation of Equation \ref{eq:mvdd} quickly grows cumbersome, we shall more often than not shorten the notation for divided differences to one that involves just the indices,
\begin{equation}\label{eq:divdiff-short2}
\big[i_{1,0}\cdots i_{1,s_1} ;\ \cdots ;i_{q,0}\cdots i_{q,s_q}\big]y := \big[x^1_{i_{1,0}},\ldots,x^1_{i_{1,s_1}};\ \cdots; x^q_{i_{q,0}},\ldots,x^q_{i_{q,s_q}}\big]y,
\end{equation}
\begin{equation}\label{eq:divdiff-short3}
\big[i_{1,0}\cdots i_{1,s_1} ;\ \cdots ;i_{q,0}\cdots i_{q,s_q}\big|j_0\cdots j_t\big]g := 
\end{equation}
\[ \big[x^1_{i_{1,0}},\ldots,x^1_{i_{1,s_1}};\ \cdots; x^q_{i_{q,0}},\ldots,x^q_{i_{q,s_q}}\big|y_{j_0},\ldots,y_{j_t}\big]g. \]

We can let some of the points coalesce by taking limits, as long as $y$ is sufficiently smooth.
In particular, letting all points in the grid coalesce to a single point $\xx_0 = (x^1_0,\ldots,x^q_0)$ yields, for any tuple $\nn = (n_1,\ldots,n_q) \in \NN^q$,
\[
\big[\underbrace{x^1_0, \ldots, x^1_0}_{n_1 + 1};\ \cdots; \underbrace{x^q_0, \ldots, x^q_0}_{n_q + 1}\big]y =
\frac{1}{\nn!} \frac{\partial^{|\nn|} y} {\partial \xx^\nn}(\xx_0).
\]
Here the derivatives are written in multi-index notation, $|\nn| := n_1 + \cdots + n_q$, and $\nn! := n_1!\cdots n_q!$\,.
Letting, in addition, the $y$-values coalesce to a single point $y_0$ yields
\[
\big[\underbrace{x^1_0, \ldots, x^1_0}_{n_1 + 1};\ \cdots; \underbrace{x^q_0,\ldots,x^q_0}_{n_q + 1}\big| \underbrace{y_0, \ldots, y_0}_{m+1}\big]g =
\frac{1}{\nn! m!} \frac{\partial^{|\nn| + m} g} {\partial \xx^\nn \partial y^m} (\xx_0,y_0).
\]

\section{A recursive formula}\label{sec:RecursiveFormula}
Let $y$ and $g$ be related as in Equation \ref{eq:ImplicitlyDefined}. In this section, we derive a formula that expresses divided differences of $y$ recursively as divided differences of $g$ and lower-order divided differences of $y$.

Consider a composition of functions $\RR^q\stackrel{\ff}{\longrightarrow} U\times V\stackrel{g}{\longrightarrow} \RR$, where we write
$\ff:\xx\longmapsto \big(f^1(\xx), \ldots, f^q(\xx), f'(\xx)\big)$.
Here $f'$ does not denote the derivative of some function $f$ but is simply notation for the last component of $\ff$.
Let be given a nonzero tuple $\nn = (n_1,\ldots,n_q)\in \NN^q$ and a grid of points
\[
\big\{\xx_\ii : \00\leq \ii\leq \nn \big\} =
\big\{ x^1_0, \ldots, x^1_{n_1} \big\} \times  \cdots \times \big\{ x^q_0, \ldots, x^q_{n_q} \big\}
\subset U,
\]
where $x^j_0 \leq \cdots \leq x^j_{n_j}$ for $j=1,\ldots,q$. Note that we allow for these coordinates to coincide. Let $f'_{\ii} := f'(\xx_\ii)$ and $f^j_{\ii} := f^j(\xx_\ii)$ for $j = 1,\ldots,q$ and $\00\leq \ii\leq \nn$. From \cite[Theorem 2]{Floater10}, we have the following \emph{multivariate chain rule} in case $\ff$ and $g$ are sufficiently smooth,
\begin{equation}\label{eq:ChainRule} \!\!\!\![\xx : \00,\nn](g\circ \ff) =
\sum_{k=1}^{|\nn|}\ 
\sum_{\00 = \ii_0 < \cdots < \ii_k = \nn}\ 
\sum_{0=j_0\leq \cdots \leq j_{q+1} = k}
\end{equation}
\[ \!\!\!\!\big[f^1_{\ii_{j_0}},f^1_{\ii_{j_0 + 1}},\ldots,f^1_{\ii_{j_1}};\cdots;f^q_{\ii_{j_{q-1}}},f^q_{\ii_{j_{q-1}+1}},\ldots,f^q_{\ii_{j_q}} \big| f'_{\ii_{j_q}}, f'_{\ii_{j_q + 1}},\ldots,f'_{\ii_{j_{q+1}}}\big]g \]
\[ \times
\left(\prod_{r=1}^q \prod_{j = j_{r-1} + 1}^{j_r} [\xx : \ii_{j-1}, \ii_j]f^r \right)
\left(\prod_{j = j_q + 1}^{j_{q+1}} [\xx : \ii_{j-1}, \ii_j]f' \right),
\]
where an empty product is considered to be $1$. (The formula in \cite{Floater10} includes a term for $k=0$, but this term doesn't show up because we assumed $\nn\neq \00$.) For any integer $q\geq 1$, let us define a \emph{path [in $\ZZ^q$]} as a finite sequence $(\pp_0, \pp_1, \ldots, \pp_n)$ of points in $\ZZ^q$ for which $\pp_0 < \pp_1 < \cdots < \pp_n$. We found it helpful to think of $0 = j_0 \leq \cdots \leq j_{q+1} = k$ as indices along the path $\00 = \ii_0 < \cdots < \ii_k = \nn$; see Figure \ref{fig:LatticePath}.

\begin{figure}
\begin{center}
\includegraphics[scale=0.58]{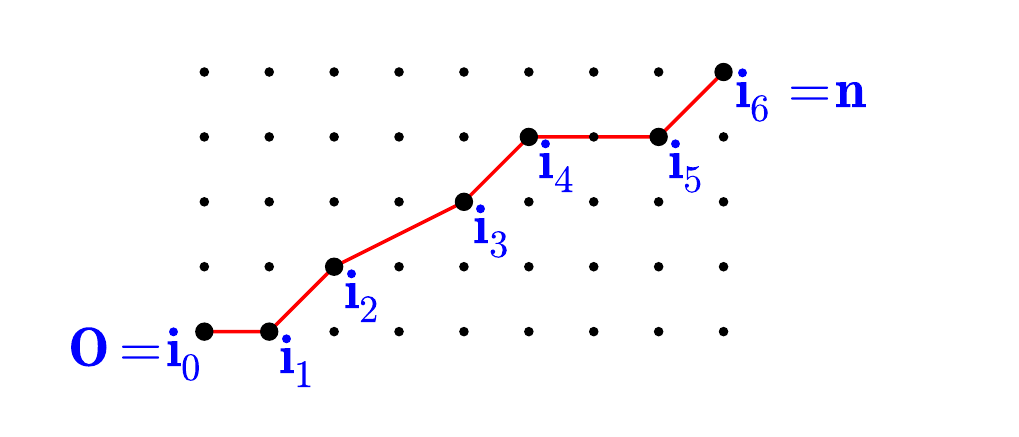}
\end{center}
\caption[A sequence representing a path]{Any sequence $\00 = \ii_0 < \ii_1 < \cdots < \ii_k = \nn$ represents a path from $\00$ to $\nn$.}
\label{fig:LatticePath}
\end{figure}

Next, let $\ff: \xx = (x^1,\ldots,x^q)\longmapsto \big(\xx,y(\xx)\big)$ define the graph of a function $y$ that is implicitly defined by $g$ as in Equation \ref{eq:ImplicitlyDefined}. Let $\{\ee_1,\ldots,\ee_q\}$ denote the standard basis of $\RR^q$, and let $1\leq j\leq k$ and $1\leq r\leq q$ be as in Equation \ref{eq:ChainRule}. It follows directly from Equation \ref{eq:mvdd-2} that the divided difference $[\xx:\ii_{j-1}, \ii_j]f^r$ of the coordinate function $f^r: (x^1,\ldots,x^q)\longmapsto x^r$ is equal to $1$ whenever $\ii_j - \ii_{j-1} = \ee_r$, and zero otherwise. The only choices of $0 = j_0 \leq \cdots \leq j_{q+1} = k$ that yield a nonzero term in Equation \ref{eq:ChainRule} are therefore those satisfying
\begin{equation}\label{eq:CompatibleSequences}
\ii_j - \ii_{j-1} = \ee_r,\quad \text{for}\quad j=j_{r-1}+1,\ldots,j_r,\quad r=1,\ldots,q.
\end{equation}

Alternatively, let $(s_1,\ldots,s_q, t) := (j_1 - j_0, \ldots, j_q - j_{q-1}, j_{q+1} - j_q)$ be the sequence
of \emph{jumps} in the sequence $(j_0,\ldots,j_{q+1})$. In terms of these jumps, Equation \ref{eq:CompatibleSequences} is equivalent to the statement that the path $\ii_0 < \cdots < \ii_k$ starts with
\begin{itemize}
\item[] $s_1$ steps of $\ee_1$, followed by
\item[] $s_2$ steps of $\ee_2$, followed by
\item[] $\ \vdots$ 
\item[] $s_q$ steps of $\ee_q$, followed by 
\item[] $t$ arbitrary steps.
\end{itemize}
Let us call any tuple $(s_1,\ldots,s_q,t)$ with this property \emph{compatible with $(\ii_0,\ldots,\ii_k)$}, or simply \emph{compatible} if it is clear which sequence $(\ii_0,\ldots,\ii_k)$ is referred to. Note that such a tuple forms an integer partition $k = s_1 + \cdots + s_q + t$. Equation \ref{eq:ChainRule} thus implies
\begin{equation}\label{eq:ChainRule2}
[\xx : \00,\nn] g\big(\cdot,y(\cdot)\big) =
\sum_{k=1}^{|\nn|}\ 
\sum_{\00 = \ii_0 < \cdots < \ii_k = \nn}\ 
\sum_{\substack{\text{compatible}\\ (s_1,\ldots,s_q,t)}}\ 
\end{equation}
\[ [0\,1\cdots s_1;\ \cdots\ ;0\,1\cdots s_q| \ii_{|\ss|}\,\ii_{|\ss|+1}\cdots \ii_{|\ss|+t}]g \prod_{j = |\ss| + 1}^{|\ss| + t} \![\xx: \ii_{j-1}, \ii_j]y, \]
where we used the shorthand notation
\[ \hspace{1.2em} [0\,1\cdots s_1;\ \cdots\ ;0\,1\cdots s_q| \ii_{|\ss|}\,\ii_{|\ss|+1}\cdots \ii_{|\ss|+t}]g\]
\[ = \big[x^1_0, x^1_1,\ldots, x^1_{s_1};\ \cdots\ ;x^q_0,x^q_1,\ldots,x^q_{s_q} \big| y_{\ii_{|\ss|}},y_{\ii_{|\ss|+1}},\ldots, y_{\ii_{|\ss|+t}}\big]g,\]
$y_{\ii} := y(\xx_\ii)$, from Equation \ref{eq:divdiff-short3}, for the divided differences of~$g$.

If $y$ is implicitly defined by $g$ as in Equation \ref{eq:ImplicitlyDefined}, the left hand side of Equation \ref{eq:ChainRule2} is zero. Suppose $\nn$ has length $|\nn| = 1$. Then $\nn = \ee_r$ for some $1\leq r\leq q$. In this case, the right hand side of Equation \ref{eq:ChainRule2} comprises two terms with $k=1$ and $\00 = \ii_0 < \ii_1 = \ee_r$. One finds
\[ 0 = [\xx: \00, \ee_r]g\big(\cdot,y(\cdot)\big) = \]
\[ [0;\ \cdots;0|\00\,\ee_r]g\, [\xx: \00,\ee_r]y +
          [\underbrace{0;\ \cdots;0}_{r-1};0\,1;0;\ \cdots;0|\ee_r]g,\]
or, equivalently,
\begin{equation}\label{eq:RecursionFormulaInitial}\tag{\textbf{R1}}
[\xx:\00,\ee_r]y =
 -\frac{[\overbrace{0;\ \cdots;0}^{r-1};0\,1;0;\ \cdots ;0|\ee_r]g}{[0;\ \cdots ;0|\00\,\ee_r]g},\qquad \text{for } r=1,\ldots,q.
\end{equation}
For example, when $y$ is a function of $q=2$ variables, this equation represents the two formulas
\begin{equation}\label{eq:DivDiff10}
[0\,1;0]y = -\frac{[0\,1;0|\ee_1]g}{[0;0|\00\, \ee_1]g},\qquad
[0;0\,1]y = -\frac{[0;0\,1|\ee_2]g}{[0;0|\00\, \ee_2]g}.
\end{equation}

Now suppose $\nn$ has length $|\nn| > 1$. There is only one term in the right hand side of Equation \ref{eq:ChainRule2} with $k=1$. This term is given by
\[ \00 = \ii_0 < \ii_1 = \nn,\qquad s_1 = \cdots = s_q = 0, \qquad t = 1\]
and involves the highest order divided difference of $y$. Isolating this divided difference yields a formula that recursively expresses divided differences of $y$ in terms of divided differences of $g$ and lower-order divided differences of $y$,
\begin{equation}\label{eq:RecursionFormulaSubsequent}\tag{\textbf{R2}}
[\xx : \00,\nn]y =
\sum_{k=2}^{|\nn|}\ 
\sum_{\00 = \ii_0 < \cdots < \ii_k = \nn}\ 
\sum_{\substack{\text{compatible}\\ (s_1,\ldots,s_q,t)}}
\end{equation}
\[ \left(-\frac
{[0\,1\cdots s_1;\ \cdots\ ;0\,1\cdots s_q| \ii_{|\ss|}\ii_{|\ss|+1} \cdots \ii_{|\ss|+t}]g}
{[0;\ \cdots; 0|\00\,\nn]g}\right)
 \prod_{j = |\ss| + 1}^{|\ss| + t} [\xx: \ii_{j-1}, \ii_j]y . \]

Let us simplify this formula. The product in Equation \ref{eq:RecursionFormulaSubsequent} can be split into two products
\begin{equation}\label{eq:TwoProducts}
\prod_{\substack{j = |\ss| + 1\\ |\ii_j - \ii_{j-1}|   =  1}}^{|\ss| + t} [\xx: \ii_{j-1}, \ii_j]y\ \cdot
\prod_{\substack{j = |\ss| + 1\\ |\ii_j - \ii_{j-1}| \geq 2}}^{|\ss| + t} [\xx: \ii_{j-1}, \ii_j]y.
\end{equation}
From Equation \ref{eq:RecursionFormulaInitial} it follows that each divided difference in the first product can be expressed as a quotient of divided differences of $g$. Our ultimate goal is to express the left hand side of Equation \ref{eq:RecursionFormulaSubsequent} solely in terms of divided differences of $g$. To achieve this, it seems natural to split the right hand side into a part that can directly be expressed in terms of divided differences of $g$ and a remaining part involving higher-order divided differences of $y$. The former part can be expressed by introducing, for every sequence
\[ (i_0^1,\ldots,i_0^q) = \ii_0 < (i_1^1,\ldots,i_1^q) = \ii_1 < \cdots < (i_k^1,\ldots,i_k^q) = \ii_k, \]
a symbol $\{\ii_0\cdots \ii_k\}g$ for the expression
\begin{equation}\label{eq:CurlyBrackets} \sum_{\substack{\text{compatible}\\ (s_1,\ldots,s_q,t)}}
\left(-\frac
{\big[i_0^1\cdots (i_0^1 + s_1);\ \cdots\ ; i_0^q\cdots (i_0^q + s_q)\big| \ii_{|\ss|} \cdots \ii_{|\ss|+t}\big]g}
{\big[i_0^1;\ \cdots; i_0^q \big|\ii_0\,\ii_k\big]g}
\right)
\end{equation}
\[ \times \prod_{r=1}^q \prod_{\substack{j = |\ss| + 1\\ \ii_j - \ii_{j-1} = \ee_r}}^{|\ss|+t}
\left( - \frac
{\big[i_{j-1}^1;\ \cdots;i_{j-1}^{r-1}; i_{j-1}^r\, i_j^r; i_j^{r+1};\ \cdots;i_j^q\big|\ii_j\big]g}
{\big[i_{j-1}^1;\ \cdots;i_{j-1}^q\big|\ii_{j-1}\,\ii_j\big]g}
\right)
\]
involving only divided differences of $g$. Whenever it is hard to separate visually the multi-indices $\ii_0,\ldots,\ii_k$, we write $\{\ii_0,\ldots,\ii_k\}g$ instead of $\{\ii_0\cdots\ii_k\}g$.

The divided differences $[\xx:\ii_{j-1},\ii_j]y$ that appear in the second product of Equation \ref{eq:TwoProducts} satisfy $|\ii_j - \ii_{j-1}|\geq 2$.
As Equation \ref{eq:CompatibleSequences} guarantees that this cannot happen for $j \leq j_q = |\ss|$, we might as well start the product of these remaining divided differences at $j=1$ instead of at $j = |\ss| + 1$. This has the advantage of making the expression independent of $|\ss|$. Equation \ref{eq:RecursionFormulaSubsequent} can thus be written in the concise form
\begin{equation}\label{eq:RecursionFormula}\tag{\textbf{R2$^\prime$}}
[\xx: \00, \nn]y = \sum_{k=2}^{|\nn|} \ \sum_{\00=\ii_0<\cdots <\ii_k = \nn}  \{\ii_0\cdots \ii_k\}g
\prod_{\substack{j = 1\\ |\ii_j - \ii_{j-1}| \geq 2}}^k [\xx: \ii_{j-1}, \ii_j]y.
\end{equation}

Equation \ref{eq:RecursionFormulaInitial} gives a formula for $[\xx: \00,\nn]y$ when $|\nn| = 1$. Let us consider Equation \ref{eq:RecursionFormula} for the case that $|\nn| = 2$. For such $\nn$, either $\nn = 2\ee_r$ with $1\leq r\leq q$, or $\nn = \ee_r + \ee_s$ with $1\leq r < s\leq q$. In the examples below we compute $[\xx: \00, \nn]y$ for these two cases, assuming $q=2$ to simplify notation. 

\begin{example}\label{ex:20}
Suppose $\nn = 2\ee_1$ (the case $\nn = 2\ee_2$ is similar). The only possible path $\00=\ii_0 < \cdots < \ii_k = \nn$ with $k=2$ in Equation \ref{eq:RecursionFormula} is given by $\00 < \ee_1 < 2\ee_1$. As for such a path the product in Equation \ref{eq:RecursionFormula} is empty, one has $[\xx: \00, 2\ee_1]y = \{\00,\ee_1,2\ee_1\}g$.

To compute $\{\00,\ee_1,2\ee_1\}g$, we need to find out which integer partitions $2 = s_1 + s_2 + t$ are compatible with this path. These are precisely the triples $(s_1,s_2,t)$ for which
\begin{equation}\label{eq:CompatibleSequences2A}
\ee_1 = \ii_1 - \ii_0 = \ii_2 - \ii_1 = \cdots = \ii_{s_1} - \ii_{s_1 - 1},
\end{equation}
\begin{equation}\label{eq:CompatibleSequences2B}
\ee_2 = \ii_{s_1 + 1} - \ii_{s_1} = \ii_{s_1 + 2} - \ii_{s_1 + 1} = \cdots = \ii_{s_1 + s_2} - \ii_{s_1 + s_2 - 1},
\end{equation}
where the first (respectively second) statement is considered to be trivially satisfied whenever $s_1 = 0$ (respectively $s_2 = 0$).
As both $\ii_1 - \ii_0$ and $\ii_2 - \ii_1$ are equal to $\ee_1$, the first condition is automatically satisfied. As neither $\ii_1 - \ii_0$ nor $\ii_2 - \ii_1$ is equal to $\ee_2$, necessarily $s_2 = 0$. It follows that there are three triples $(s_1,s_2,t) = (0,0,2),\, (1,0,1),\, (2,0,0)$ compatible with $(\00,\ee_1,2\ee_1)$. Each of these sequences corresponds to a term in $\{\00,\ee_1,2\ee_1\}g$, and we conclude that
\begin{equation}\label{eq:DivDiff20}
[\xx: \00,2\ee_1]y = \{\00,\ee_1,2\ee_1\}g =
\end{equation}
\[ \quad -
\frac{[0;0|\00,\ee_1,2\ee_1]g}{[0;0|\00,2\ee_1]g}
\frac{[0\,1;0|\ee_1]g}{[0;0|\00,\ee_1]g}
\frac{[1\,2;0|2\ee_1]g}{[1;0|\ee_1,2\ee_1]g} \]
\[ \quad +
\frac{[0\,1;0|\ee_1,2\ee_1]g}{[0;0|\00,2\ee_1]g}
\frac{[1\,2;0|2\ee_1]g}{[1;0|\ee_1,2\ee_1]g} \]
\[ \quad - \frac{[0\,1\,2;0|2\ee_1]g}{[0;0|\00,2\ee_1]g}. \]
\end{example}

\begin{example}\label{ex:11} Suppose $\nn = \ee_1 + \ee_2$. Equation \ref{eq:RecursionFormula} is a sum over the two possible paths $\00 < \ee_1 < \ee_1 + \ee_2$ and $\00 < \ee_2 < \ee_1 + \ee_2$.

A triple $(s_1,s_2,t)$ is compatible with the path $\00 < \ee_1 < \ee_1 + \ee_2$ precisely when Equations \ref{eq:CompatibleSequences2A} and \ref{eq:CompatibleSequences2B} hold. For this path, the first equation is equivalent to $s_1$ either being $0$ or $1$. If $s_1 = 0$, then the second equation implies that $s_2 = 0$. If $s_1 = 1$, on the other hand, the second equation implies that $s_2$ is either $0$ or $1$. One finds three triples $(s_1,s_2,t) = (0,0,2),\, (1,0,1),\,(1,1,0)$ compatible with $(\00, \ee_1, \ee_1 + \ee_2)$, yielding
\[ \{\00, \ee_1, \ee_1 + \ee_2\}g = \]
\[ \quad -
\frac{[0;0|\00,\ee_1,\ee_1+\ee_2]g}{[0;0|\00,\ee_1+\ee_2]g}
\frac{[0\,1;0|\ee_1]g}{[0;0|\00,\ee_1]g}
\frac{[1;0\,1|\ee_1+\ee_2]g}{[1;0|\ee_1,\ee_1+\ee_2]g} \]
\[ \quad +
\frac{[0\,1;\ee_1|\ee_1,\ee_1+\ee_2]g}{[0;0|\00,\ee_1 + \ee_2]g}
\frac{[1;0\,1|\ee_1+\ee_2]g}{[1;0|\ee_1,\ee_1+\ee_2]g} \]
\[ \quad - \frac{[0\, 1;0\,1|\ee_1+\ee_2]g}{[0; 0|\00,\ee_1 + \ee_2]g}.\]

Similarly, a triple $(s_1,s_2,t)$ is compatible with the path $\00 < \ee_2 < \ee_1 + \ee_2$ precisely when Equations \ref{eq:CompatibleSequences2A}, \ref{eq:CompatibleSequences2B} hold. For this path, however, the fact that $\ii_2 - \ii_1 = \ee_1$ comes after $\ii_1 - \ii_0 = \ee_2$ implies that $s_1 = 0$. One finds two triples $(s_1,s_2,t) = (0,0,2),\,(0,1,1)$ compatible with $(\00, \ee_2, \ee_1 + \ee_2)$, yielding
\[ \{\00,\ee_2,\ee_1+\ee_2\}g = \]
\[ \quad -
\frac{[0;0|\00,\ee_2,\ee_1+\ee_2]g}{[0;0|\00,\ee_1+\ee_2]g}
\frac{[0;0\,1|\ee_2]g}{[0;0|\00,\ee_2]g}
\frac{[0\,1;1|\ee_1+\ee_2]g}{[0;1|\ee_2,\ee_1+\ee_2]g} \]
\[ \quad +
\frac{[0;0\,1|\ee_2,\ee_1+\ee_2]g}{[0;0|\00,\ee_1 + \ee_2]g}
\frac{[0\,1;1|\ee_1+\ee_2]g}{[0;1|\ee_2,\ee_1+\ee_2]g}.\]
As for both paths the product in Equation \ref{eq:RecursionFormula} is empty, it follows that
\begin{equation}\label{eq:DivDiff11}
[\xx:\00,\ee_1+\ee_2]y = \{\00, \ee_1, \ee_1+\ee_2\}g + \{\00, \ee_2, \ee_1+\ee_2\}g.
\end{equation} 
\end{example}

\section{A formula for divided differences of implicit functions}\label{sec:Formula}
Let $y$ be implicitly defined by $g$ as in Equation \ref{eq:ImplicitlyDefined}. In this section we derive a formula that expresses divided differences of $y$ solely in terms of divided differences of $g$. For $\nn$ with $|\nn| = 2$, Equation \ref{eq:RecursionFormula} immediately yields the two formulas
\begin{equation}\label{eq:2}  [\xx: \00, 2\ee_r]y = \{\00,\ee_r,2\ee_r\}g, \end{equation}
\begin{equation}\label{eq:11} [\xx: \00, \ee_r+\ee_s]y = \{\00, \ee_r, \ee_r + \ee_s\}g + \{\00, \ee_s, \ee_r + \ee_s\}g, \end{equation}
where $1\leq r < s \leq q$ and the expressions $\{\ii_0\cdots\ii_k\}$ are defined in Equation \ref{eq:CurlyBrackets}. For $\nn$ with $|\nn| = 3$, one can distinguish three cases:  $\nn = 3\ee_r$, $\nn = 2\ee_r + \ee_s$, and $\nn = \ee_r + \ee_s + \ee_t$, with $1\leq r, s, t\leq q$ distinct. Let us compute $[\xx: \00, \nn]$ for these $\nn$ to get a feel for what a general formula should be. Repeatedly applying Equation \ref{eq:RecursionFormula} yields
\begin{equation}\label{eq:3} [\xx: \00, 3\ee_r]y = \end{equation}
\[ \phantom{+} \{\00,\ee_r,2\ee_r,3\ee_r\}g + \{\00,2\ee_r,3\ee_r\}g\cdot \{\00,\ee_r,2\ee_r\}g\]
\[  + \{\00,\ee_r,3\ee_r\}g \cdot \{\ee_r,2\ee_r,3\ee_r\}g \]
\vspace{-3mm}
\begin{equation}\label{eq:21} [\xx: \00, 2\ee_r + \ee_s]y = \end{equation}
\[ \phantom{+} \{\00,\ee_r,2\ee_r,2\ee_r+\ee_s\}g + \{\00,2\ee_r,2\ee_r+\ee_s\}g\cdot \{\00,\ee_r,2\ee_r\}g \]
\[                                          + \{\00,\ee_r, 2\ee_r+\ee_s\}g\cdot \{\ee_r,2\ee_r,2\ee_r+\ee_s\}g\]
\[          + \{\00,\ee_r,\ee_r + \ee_s, 2\ee_r+\ee_s\}g + \{\00,\ee_r+\ee_s,2\ee_r+\ee_s\}g\cdot \{\00,\ee_r,\ee_r+\ee_s\}g \]
\[                                                 + \{\00,\ee_r, 2\ee_r+\ee_s\}g\cdot \{\ee_r,\ee_r+\ee_s,2\ee_r+\ee_s\}g \]
\[          + \{\00,\ee_s,\ee_r + \ee_s, 2\ee_r+\ee_s\}g + \{\00,\ee_r+\ee_s,2\ee_r+\ee_s\}g\cdot \{\00,\ee_s,\ee_r+\ee_s\}g \]
\[                                                 + \{\00,\ee_s, 2\ee_r+\ee_s\}g\cdot \{\ee_s,\ee_r+\ee_s,2\ee_r+\ee_s\}g \]
\vspace{-2mm}
\begin{equation}\label{eq:111} [\xx: \00, \ee_r + \ee_s + \ee_t]y = \end{equation}
\[ \phantom{+} \{\00,\ee_r,\ee_r+\ee_s,\ee_r+\ee_s+\ee_t\}g + \{\00,\ee_r+\ee_s,\ee_r+\ee_s+\ee_t\}g\cdot \{\00,\ee_r,\ee_r+\ee_s\}g \]
\[  + \{\00,\ee_r,\ee_r+\ee_s+\ee_t\}g\cdot \{\ee_r,\ee_r+\ee_s,\ee_r+\ee_s+\ee_t\}g \]
\[        + \{\00,\ee_r,\ee_r+\ee_t,\ee_r+\ee_s+\ee_t\}g + \{\00,\ee_r+\ee_t,\ee_r+\ee_s+\ee_t\}g\cdot \{\00,\ee_r,\ee_r+\ee_t\}g\]
\[  + \{\00,\ee_r,\ee_r+\ee_s+\ee_t\}g\cdot \{\ee_r,\ee_r+\ee_t,\ee_r+\ee_s+\ee_t\}g\] 
\[        + \{\00,\ee_s,\ee_r+\ee_s,\ee_r+\ee_s+\ee_t\}g + \{\00,\ee_r+\ee_s,\ee_r+\ee_s+\ee_t\}g\cdot \{\00,\ee_s,\ee_r+\ee_s\}g\]
\[  + \{\00,\ee_s,\ee_r+\ee_s+\ee_t\}g\cdot \{\ee_s,\ee_r+\ee_s,\ee_r+\ee_s+\ee_t\}g\]
\[        + \{\00,\ee_s,\ee_s+\ee_t,\ee_r+\ee_s+\ee_t\}g + \{\00,\ee_s+\ee_t,\ee_r+\ee_s+\ee_t\}g\cdot \{\00,\ee_s,\ee_s+\ee_t\}g\]
\[  + \{\00,\ee_s,\ee_r+\ee_s+\ee_t\}g\cdot \{\ee_s,\ee_s+\ee_t,\ee_r+\ee_s+\ee_t\}g\]
\[        + \{\00,\ee_t,\ee_r+\ee_t,\ee_r+\ee_s+\ee_t\}g + \{\00,\ee_r+\ee_t,\ee_r+\ee_s+\ee_t\}g\cdot \{\00,\ee_t,\ee_r+\ee_t\}g\]
\[  + \{\00,\ee_t,\ee_r+\ee_s+\ee_t\}g\cdot \{\ee_t,\ee_r+\ee_t,\ee_r+\ee_s+\ee_t\}g\]
\[        + \{\00,\ee_t,\ee_s+\ee_t,\ee_r+\ee_s+\ee_t\}g + \{\00,\ee_s+\ee_t,\ee_r+\ee_s+\ee_t\}g\cdot \{\00,\ee_t,\ee_s+\ee_t\}g\]
\[  + \{\00,\ee_t,\ee_r+\ee_s+\ee_t\}g\cdot \{\ee_t,\ee_s+\ee_t,\ee_r+\ee_s+\ee_t\}g\]
These three formulas exhibit a remarkable pattern. For every choice of the path $\00 = \pp_0 < \pp_1 < \pp_2 < \pp_3 = \nn$, we seem to be getting a sum
\[ \{\pp_0 \pp_1 \pp_2 \pp_3\}g +
   \{\pp_0 \pp_2 \pp_3\}g\cdot \{\pp_0 \pp_1 \pp_2\}g + \{\pp_0 \pp_1 \pp_3\}g\cdot \{\pp_1 \pp_2 \pp_3\}g. \]
This expression bears a striking resemblance to the right hand side of the univariate formula
\[ [0123]y = \{0123\}g + \{023\}g\, \{012\}g + \{013\}g\, \{123\}g\]
established in \cite[Theorem 4]{MuntinghFloater11}. This suggests that, for general $\nn = (n_1,\ldots,n_q)$, the divided difference $[\xx:\00,\nn]y$ is a sum of ${n_1+\cdots +n_q\choose n_1,\ldots,n_q}$ univariate formulas, one for each choice of the path $\00 = \pp_0 < \cdots < \pp_{|\nn|} = \nn$. See Figure \ref{fig:PathPolygonA} for an example of such a path.

Theorem \ref{thm:MainTheorem} casts this suspicion into a precise form. In order to state this Theorem, we introduce some notation for polygon partitions. With a sequence of labels $\pp_0,\pp_1,\ldots,\pp_n$ we associate the ordered vertices of a convex polygon. A \emph{partition of a convex polygon} is the result of connecting certain pairs of nonadjacent vertices with straight line segments, none of which intersect. We denote the set of all partitions of the polygon with vertices $\pp_0,\pp_1,\ldots,\pp_n$ by $\PPP(\pp_0,\pp_1,\ldots,\pp_n)$. The points $\pp_0,\pp_1,\ldots,\pp_n$ and line segments (between either adjacent or nonadjacent vertices) form the vertices and edges of a plane graph. As such, every partition $\pi\in \PPP(\pp_0,\pp_1,\ldots,\pp_n)$ is described by its set $F(\pi)$ of \emph{[oriented] faces}, which does not include the unbounded face. Each face $f\in F(\pi)$ is represented by a subsequence $f = (\vv_0,\vv_1,\ldots,\vv_k)$ of the sequence $(\pp_0,\pp_1,\ldots,\pp_n)$ of length at least three. We let $E(\pi)$ denote the set of \emph{edges} in $\pi$, each of which is represented by a subsequence $(\vv_0,\vv_1)$ of $(\pp_0,\pp_1,\ldots,\pp_n)$ of length two. An edge $(\vv_0,\vv_1)$ is called an \emph{outer edge} if the vertices $\vv_0,\vv_1$ are adjacent. If $\vv_0,\vv_1$ are not adjacent, $(\vv_0,\vv_1)$ is called an \emph{inner edge}. Figure \ref{fig:PathPolygonB} depicts an example of such a partition of a convex polygon.

Armed with this notation for partitions of convex polygons, we are now able to state the Main Theorem of this paper.

\begin{figure}
\begin{center}
\subfloat[]{\includegraphics[scale=0.58]{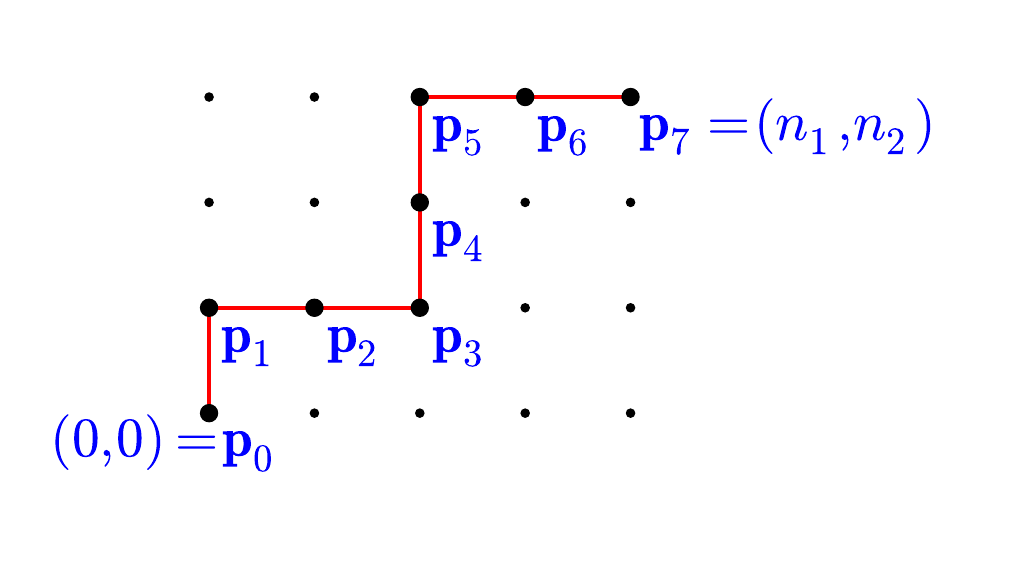}\label{fig:PathPolygonA}}
\subfloat[]{\includegraphics[scale=0.58]{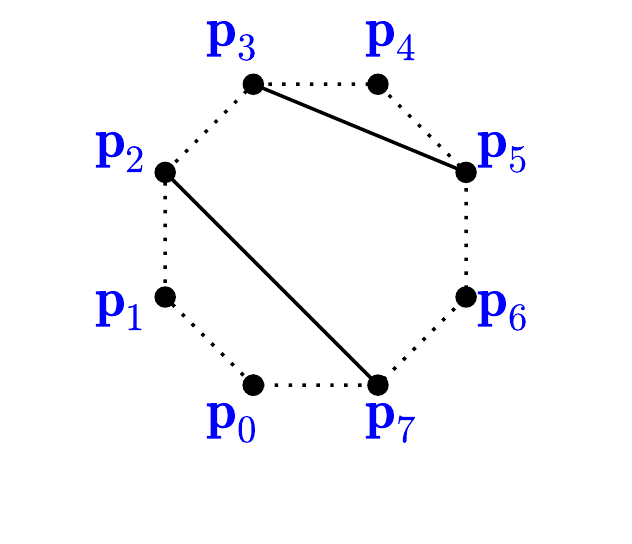}\label{fig:PathPolygonB}}
\end{center}
\caption[A path and a corresponding polygon partition]{For $\nn = (n_1,n_2) = (4,3)$, the figure to the left depicts a choice of a path $\00 = \pp_0 < \cdots < \pp_{|\nn|} = \nn$. The figure to the right shows a partition of the convex polygon corresponding to this path with faces
$(\pp_0, \pp_1, \pp_2, \pp_7), (\pp_2,\pp_3,\pp_5,\pp_6,\pp_7), (\pp_3,\pp_4,\pp_5)$, inner edges $(\pp_2,\pp_7), (\pp_3,\pp_5)$ (drawn solid), and outer edges $(\pp_0,\pp_1), (\pp_1,\pp_2), \ldots, (\pp_6,\pp_7), (\pp_0,\pp_7)$ (drawn dotted).}
\label{fig:PathPolygon}
\end{figure}

\begin{theorem}[Main Theorem]\label{thm:MainTheorem}
For $\nn$ with $|\nn|\geq 2$,
\begin{equation}\label{eq:MainTheorem}
[\xx: \00, \nn]y =
\sum_{\00 = \pp_0 < \pp_1 < \cdots < \pp_{|\nn|} = \nn}\ 
\sum_{\pi \in \PPP(\pp_0,\pp_1,\ldots,\pp_{|\nn|})}\ 
\prod_{(\vv_0,\vv_1,\ldots,\vv_r)\in F(\pi)}\{\vv_0 \vv_1\cdots \vv_r\}g.
\end{equation}
\end{theorem}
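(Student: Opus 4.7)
The plan is to prove the Main Theorem by strong induction on $|\nn|$, using the recursive formula (R2$^\prime$) as the engine and a standard decomposition of polygon partitions along the root face as the combinatorial bridge.

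For the base case $|\nn|=2$, I would simply observe that the polygon on three vertices has a unique (trivial) partition, so the right-hand side of \eqref{eq:MainTheorem} reduces to Equation~\eqref{eq:2} or \eqref{eq:11}, which were already obtained from (R2$^\prime$).

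For the inductive step, assume the theorem for every $\mm$ with $2\leq|\mm|<|\nn|$, translated to grids starting at any $\aaa\leq\bb$ with $\bb-\aaa=\mm$ (the proof is translation-invariant in the indices, since the theorem is stated in terms of the abstract grid indices). Fix a path $\pp_0<\cdots<\pp_{|\nn|}$ and a partition $\pi\in\PPP(\pp_0,\ldots,\pp_{|\nn|})$; consider the \emph{root face} $f_0=(\vv_0,\ldots,\vv_k)$, i.e.\ the unique face containing the distinguished outer edge $(\pp_0,\pp_{|\nn|})$. Its other $k$ edges $(\vv_{j-1},\vv_j)$ are either outer edges of the big polygon (when $\vv_{j-1},\vv_j$ are consecutive among the $\pp$'s, i.e.\ $|\vv_j-\vv_{j-1}|=1$) or inner diagonals that cut off a sub-polygon $P_j$ on the vertices of the path from $\vv_{j-1}$ to $\vv_j$ (when $|\vv_j-\vv_{j-1}|\geq 2$). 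The restriction of $\pi$ to each $P_j$ is an arbitrary partition of $P_j$, and conversely any choice of root-face vertices together with partitions of the resulting sub-polygons reassembles into a unique $\pi$. This decomposition factors the product over $F(\pi)$ as $\{\vv_0\cdots\vv_k\}g$ times the analogous products over the sub-partitions.

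Regrouping the double sum on the right-hand side of \eqref{eq:MainTheorem} by fixing $(\vv_0,\ldots,\vv_k)$ first and then summing over (sub-path, sub-partition) inside each $P_j$, the inner sums are exactly the right-hand side of \eqref{eq:MainTheorem} applied to the interval $[\vv_{j-1},\vv_j]$, which by the inductive hypothesis equals $[\xx:\vv_{j-1},\vv_j]y$. The resulting expression
\[
\sum_{k=2}^{|\nn|}\ \sum_{\00=\vv_0<\cdots<\vv_k=\nn}\{\vv_0\cdots\vv_k\}g\prod_{\substack{j=1\\|\vv_j-\vv_{j-1}|\geq 2}}^{k}[\xx:\vv_{j-1},\vv_j]y
\]
is precisely the right-hand side of (R2$^\prime$), hence equals $[\xx:\00,\nn]y$, completing the induction.

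The main obstacle is the combinatorial bijection in the inductive step: one has to argue cleanly that the root face of a polygon partition, together with partitions of the sub-polygons cut off by its inner edges, is a faithful encoding of the partition, and that the contribution to $\prod_{f\in F(\pi)}\{f\}g$ factors accordingly. Once this ``root-face decomposition'' is established, everything else is a matter of matching a sum over (maximal path, partition of the associated $(|\nn|+1)$-gon) with the sum in (R2$^\prime$) over coarse paths $\ii_0<\cdots<\ii_k$ and recognizing the jumps $|\ii_j-\ii_{j-1}|\geq 2$ as the inner edges of the root face.
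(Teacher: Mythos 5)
Your proposal is correct and follows essentially the same route as the paper: an induction on $|\nn|$ driven by (R2$^\prime$), with the combinatorial content being exactly the identification of the summand $\ii_0<\cdots<\ii_k$ in (R2$^\prime$) with the face of the polygon partition that contains the edge $(\pp_0,\pp_{|\nn|})$ (the paper encodes your ``root-face decomposition'' as the bijection between $\PPP_\ii$ and $\{\pi:\ii\in F(\pi)\}$). The only cosmetic difference is direction — the paper expands (R2$^\prime$) forward into the sum over partitions, while you regroup the sum over partitions back into (R2$^\prime$) — and your base case $|\nn|=2$ indeed suffices, even though the paper also records $|\nn|=3$ explicitly.
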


To prove Theorem \ref{thm:MainTheorem}, our plan is to use Equation \ref{eq:RecursionFormula} recursively to express $[\xx:\00,\nn]y$ solely in terms of divided differences of $g$. Before we proceed with this proof we assign some visual meaning to Equation \ref{eq:RecursionFormula} to highlight the backbone of this proof. We call a sequence $\ii = (\ii_0, \ii_1,\ldots, \ii_k)$ a \emph{subpath} of $\pp = (\pp_0,\pp_1,\ldots,\pp_n)$ and $\pp$ a \emph{superpath} of $\ii$, whenever
\[ \ii_0 = \pp_{l_0} < \pp_{l_0 + 1} < \cdots < \ii_1 = \pp_{l_1} < \pp_{l_1 + 1} < \cdots < \ii_k = \pp_{l_k},\]
for some increasing indices $0 = l_0 < l_1 < \cdots < l_k = n$. Every subpath $\ii$ of $\pp$ induces a partition in $\PPP(\pp_0, \pp_1,\ldots,\pp_n)$ whose set of faces comprises an \emph{inner face} $(\ii_0,\ii_1,\ldots,\ii_k)$ and \emph{outer faces} $(\ii_{j-1}, \ldots, \ii_j) = (\pp_{l_{j-1}}, \pp_{l_{j-1}+ 1},\ldots, \pp_{l_j})$ for every $j = 1,\ldots, k$ with $|\ii_j - \ii_{j-1}| \geq 2$. See Figure \ref{fig:InnerOuterB} for an example.

In general, a sequence $(\ii_0,\ii_1,\ldots,\ii_k)$ has several superpaths $(\pp_0,\pp_1,\ldots,\pp_n)$. Let us introduce some notation to consider simultaneously the partitions of the outer faces (each of which is a convex polygon itself) of $\ii = (\ii_0,\ii_1,\ldots,\ii_k)$ for all these superpaths. We define
\[ \PPP_\ii :=
\prod_{\substack{j = 1\\ m := |\ii_j - \ii_{j-1}| \geq 2}}^k ~ \coprod_{\ii_{j-1} = \qq_0 < \cdots < \qq_m = \ii_j} \PPP(\qq_0,\qq_1,\ldots,\qq_m), \]
which represents a set of tuples of partitions, each entry in such a tuple corresponding to a partition of a path with steps in $\{\ee_1,\ldots,\ee_q\}$ from $\ii_{j-1}$ to $\ii_j$ for some $j$. For example, for $\ii = (\ii_0,\ii_1,\ldots,\ii_5) = \big( (0,0), (0,1), (1,1), (2,2), (2,3),$ $(4,3) \big)$,
one has $|\ii_j - \ii_{j-1}| \geq 2$ only for $j = 3, 5$ (see Figure \ref{fig:InnerOuterA}). There are two paths with steps in $\{\ee_1,\ee_2\}$ from $\ii_2 = (1,1)$ to $\ii_3 = (2,2)$ and only one from $\ii_4 = (2,3)$ to $\ii_5 = (4,3)$. It follows that
\begin{align*}
\PPP_\ii = &\,   \Big( \PPP\big( \ii_2, (1,2), \ii_3 \big) \sqcup \PPP\big( \ii_2, (2,1), \ii_3 \big) \Big) \times \PPP\big( \ii_4, (3,3), \ii_5 \big)\\
         = &\,   \left\{ \Big( \big( \ii_2, (1,2), \ii_3 \big), \big(\ii_4, (3,3), \ii_5\big) \Big),
                         \Big( \big( \ii_2, (2,1), \ii_3 \big), \big(\ii_4, (3,3), \ii_5\big) \Big) \right\}.
\end{align*}

We now associate divided differences to these geometric objects. To each outer face $(\ii_{j-1},\ldots, \ii_j)$ we associate the divided difference $[\xx: \ii_{j-1}, \ii_j]y$, and to each inner face $(\ii_0, \ii_1, \ldots, \ii_k)$ we associate the expression $\{\ii_0\cdots \ii_k\}g$. For any sequence $\ii$ that appears in the sum of Equation \ref{eq:RecursionFormula}, the corresponding inner face therefore represents that part of Equation \ref{eq:RecursionFormula} that can be written solely in terms of divided differences of $g$, while the outer faces represent the part that is still expressed as a divided difference of $y$. 

\begin{figure}
\begin{center}
\subfloat[]{\includegraphics[scale=0.58]{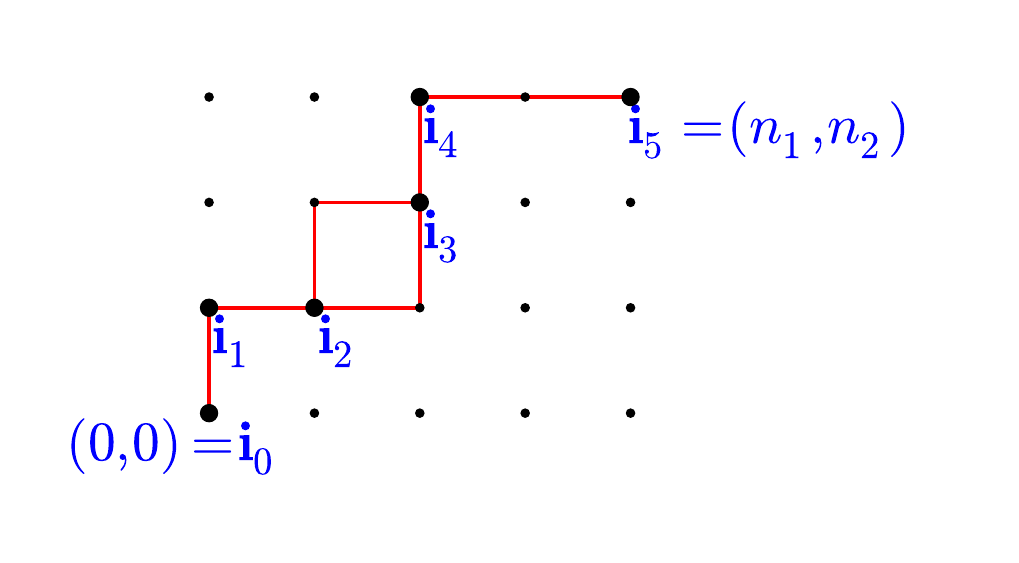}\label{fig:InnerOuterA}}
\subfloat[]{\includegraphics[scale=0.58]{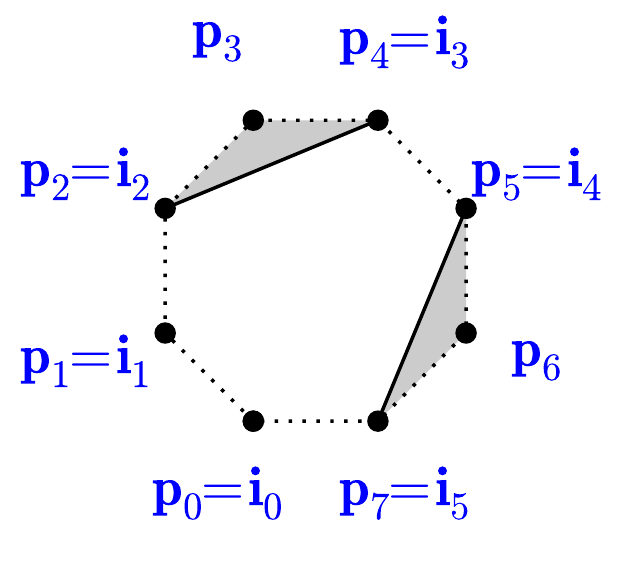}\label{fig:InnerOuterB}}
\end{center}
\caption[A sequence, its superpaths, and the corresponding polygon partition]{For $\nn = (n_1,n_2) = (4,3)$, the figure to the left shows the points in the sequence $\ii = (\ii_0,\ii_1,\ldots,\ii_5) = \big( (0,0), (0,1), (1,1), (2,2), (2,3), (4,3) \big)$, together with the paths traced out by its two superpaths $\pp = (\pp_0, \pp_1, \ldots, \pp_7)$. The figure to the right shows the convex polygon corresponding to each of these paths. The sequence $\ii$ gives rise to two outer faces $(\pp_2,\pp_3,\pp_4)$ and $(\pp_5,\pp_6,\pp_7)$, which are drawn shaded in the figure. Depending on the choice of the superpath $\pp$, the former outer face is either $\big(\ii_2, (1,2),\ii_3\big)$ or $\big(\ii_2, (2,1),\ii_3\big)$, while the latter is equal to $\big( \ii_4, (3,3), \ii_5\big)$ for both paths $\pp$.}
\label{fig:InnerOuter}
\end{figure}

\begin{proof}[Proof of Theorem \ref{thm:MainTheorem}] The proof is by induction on $|\nn|$. Equations \ref{eq:2}--\ref{eq:111} show that the formula holds for $|\nn| = 2, 3$. For a fixed $|\nn|\geq 4$, suppose the formula holds for all smaller $|\nn|$ (but with $|\nn|\geq 2$). Consider the recursion formula from Equation \ref{eq:RecursionFormula}. As in each term $k\geq 2$, one has $|\ii_j - \ii_{j-1}| < |\nn|$ for $j = 1,\ldots, k$. By induction, therefore, we can replace each divided difference $[\xx: \ii_{j-1}, \ii_j]y$ in Equation \ref{eq:RecursionFormula} by an expression involving only divided differences of $g$. The symbol $\PPP_\ii$ enables us to consider these expressions for $[\xx: \ii_{j-1}, \ii_j]y$ simultaneously, yielding
\begin{align}
  &\ \prod_{\substack{j = 1\\|\ii_j - \ii_{j-1}| \geq 2}}^k [\xx: \ii_{j-1}, \ii_j]y\label{eq:ProdY} \\
= &\ \prod_{\substack{j = 1\\m := |\ii_j - \ii_{j-1}| \geq 2}}^k \sum_{\ii_{j-1} = \qq_0 < \cdots < \qq_m = \ii_j}\ \sum_{\pi \in \PPP(\qq_0,\ldots,\qq_m)}\ \prod_{(\vv_0,\ldots,\vv_r)\in F(\pi)}\{\vv_0\cdots \vv_r\}g \notag \\
= &\ \sum_{(\pi_1,\pi_2,\ldots )\in \PPP_\ii} \prod_{\substack{j\geq 1\\ (\vv_0,\ldots,\vv_r)\in F(\pi_j)}} \{\vv_0\cdots \vv_r\}g. \notag
\end{align}

For a given sequence $\00 = \ii_0 < \cdots < \ii_k = \nn$ with $k\geq 2$, the set $\PPP_\ii$ can be identified with the set
\[ \left\{ \pi\in \PPP(\pp_0,\ldots,\pp_{|\nn|}):\00 = \pp_0 < \cdots < \pp_{|\nn|} = \nn,\ \ii\in F(\pi)\right\} \]
by the bijection that maps any tuple $(\pi_1,\pi_2,\ldots)$ in $\PPP_\ii$ to the partition $\pi$ with $F(\pi) = \{\ii\} \cup F(\pi_1) \cup F(\pi_2) \cup \cdots$.
Applying this bijection to Equation \ref{eq:ProdY} and substituting the result into the recursive formula from Equation \ref{eq:RecursionFormula} yields
\begin{align}
  &\ [\xx: \00, \nn]y \notag \\
= &\ \sum_{k=2}^{|\nn|}\ \sum_{\00=\ii_0<\cdots <\ii_k = \nn}  \{\ii_0\cdots \ii_k\}g \sum_{(\pi_1,\pi_2,\ldots )\in \PPP_\ii} \prod_{\substack{j\geq 1\\ (\vv_0,\ldots,\vv_r)\in F(\pi_j)}} \!\!\!\{\vv_0\cdots \vv_r\}g \notag \\
= &\ \sum_{k=2}^{|\nn|} \sum_{\substack{\00=\ii_0<\cdots <\ii_k = \nn\\\text{a subpath of}\\ \00 = \pp_0 < \cdots < \pp_{|\nn|} = \nn}} \sum_{\substack{\pi\in \PPP(\pp_0,\ldots,\pp_{|\nn|})\\ \ii\in F(\pi)}}\ \prod_{(\vv_0,\ldots,\vv_r)\in F(\pi)} \{\vv_0\cdots \vv_r\}g \notag \\
= &\ \sum_{\00 = \pp_0 < \cdots < \pp_{|\nn|} = \nn}\ \sum_{\pi\in \PPP(\pp_0,\ldots,\pp_{|\nn|})}\ \prod_{(\vv_0,\ldots,\vv_r)\in F(\pi)} \{\vv_0\cdots \vv_r\}g. \notag\qquad \qedhere
\end{align}
\end{proof}

\section{Polygon partitions and plane trees}\label{sec:PartitionsAndTrees}
While the compact nature of Equation \ref{eq:MainTheorem} is useful to state and prove Theorem \ref{thm:MainTheorem}, it is less appropriate for finding a specialized formula for derivatives. In this section we adapt Equation \ref{eq:MainTheorem} to a form better suited for this purpose.

Following \cite[p. 294]{Stanley97}, one defines a \emph{plane tree} $T$ recursively as a finite set of vertices such that one specially designated vertex is called the \emph{root}, and the remaining vertices are put into an \emph{ordered} partition $(T_1,\ldots, T_m)$ of $m\geq 0$ disjoint nonempty sets $T_1,\ldots,T_m$, each of which is a plane tree. We recall the following Lemma, which appears as Proposition 6.2.1 in \cite{Stanley99}.
\begin{lemma}\label{lem:DissectionsPlanarTrees} For all integers $m,n$ with $m > n\geq 2$, there is a bijection between the following two structures:
\begin{itemize}
\item Plane trees with $m$ vertices of which $n$ are leaves, and all other vertices have at least two descendants.
\item Partitions with $m - n$ faces of a convex polygon with $n + 1$ vertices.
\end{itemize}
\end{lemma}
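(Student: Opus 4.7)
The plan is to exhibit the bijection explicitly and verify it by induction on $m$ (equivalently, on the number of faces $m-n$). The base case is the single-face partition of a convex polygon with $n+1$ vertices: one face and $m-n=1$, so $m = n+1$, corresponding to a plane tree with one internal node (the root) and $n$ leaves as its children. This degenerate tree is the only one with $m - n = 1$ internal vertex meeting the degree hypothesis, matching the unique one-face partition.

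For the forward map, first fix a convex polygon with vertices $v_0,\ldots,v_n$ listed counterclockwise, and designate the edge $v_n v_0$ as the \emph{root edge}. Given a partition $\pi$, I would build a plane tree $T(\pi)$ recursively: its root is the unique face $F$ of $\pi$ incident to the root edge. If $F$ has sides $e_1,\ldots,e_k$ listed in counterclockwise order starting immediately after the parent edge (the root edge, at the top level), then the ordered children of $F$ are produced by walking through $e_1,\ldots,e_k$: each $e_i$ that is an outer edge (other than the root) contributes a leaf, and each inner $e_i$, which separates $F$ from a neighboring face $F'$, contributes the subtree $T(\pi')$ built recursively from the partition $\pi'$ induced on the subpolygon on the other side of $e_i$, rooted at $F'$ with parent edge $e_i$. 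Since each face of $\pi$ has at least three sides and exactly one is the parent edge, each internal node has $\geq 2$ children. Counting gives $m-n$ internal nodes (one per face) and $n$ leaves (one per non-root outer edge), totaling $m$ vertices.

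For the inverse map, given a plane tree $T$ satisfying the hypotheses, list its $n$ leaves $\ell_1,\ldots,\ell_n$ in the order encountered by a left-to-right depth-first traversal, and identify them with the outer edges $v_0v_1,\,v_1v_2,\ldots,v_{n-1}v_n$; the remaining edge $v_n v_0$ is declared the root edge. Each internal node, together with the cyclic ordering of its parent edge and its children's associated edges, determines a face of the polygon: the children that are leaves contribute outer edges, and each child that is an internal node contributes an inner edge shared with the face coming from that child. The plane-tree ordering ensures these faces fit together without crossings to yield a valid partition of the polygon.

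The two constructions are mutual inverses by a straightforward induction: removing the root face of $\pi$ splits the remaining partition into subpartitions in bijection with the children of the root of $T(\pi)$, and the recursive definitions agree. The main obstacle, and the point that deserves careful writing, is the bookkeeping of orientations: one must verify that the counterclockwise cyclic order of edges around each face, together with the singled-out parent edge, corresponds exactly to the left-to-right order of children of the associated internal node in $T$. Once this compatibility is nailed down, the inductive step and the leaf/face counts go through immediately.
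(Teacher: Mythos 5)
Your proposal is correct and follows essentially the same approach as the paper: you construct the identical explicit bijection (the face incident to the root edge becomes the root, outer edges become leaves, inner edges become parent--child links, recursion into subpolygons), differing only in an inessential orientation convention (counterclockwise vs.\ clockwise) and in spelling out the induction for why the maps are mutually inverse, which the paper leaves implicit and covers by citing Stanley.
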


We now explicitly describe this bijection. Suppose we are given a polygon partition in $\PPP(\pp_0,\pp_1,\ldots,\pp_n)$ with $m-n$ faces. To the edge $(\pp_0, \pp_n)$ we associate a vertex that represents the root of our tree. As $(\pp_0,\pp_n)$ is an outer edge, it belongs to a unique face. The other edges of this face are taken to be the descendants of the root vertex. As we are constructing a \emph{plane} tree we need to order these descendants; the vertices correspond from left to right to the edges encountered when traversing the border of the face clockwise, starting at $(\pp_0,\pp_n)$. We then repeat this process for each of the new edges until we are out of edges. This construction yields a plane tree with $n$ leaves corresponding to the outer edges $(\pp_0,\pp_1),(\pp_1,\pp_2),\ldots,(\pp_{n-1},\pp_n)$ and $m-n$ nonleaf vertices corresponding to the faces of the polygon partition.

For example, for the partition in $\PPP(\pp_0,\pp_1,\ldots,\pp_7)$ with set of faces
\[ \big\{(\pp_0,\pp_5,\pp_6,\pp_7), (\pp_0,\pp_2,\pp_5), (\pp_0,\pp_1,\pp_2), (\pp_2,\pp_4,\pp_5), (\pp_2,\pp_3,\pp_4)\big\},\]
this bijection can be visualized as follows.

\includegraphics[scale=0.45]{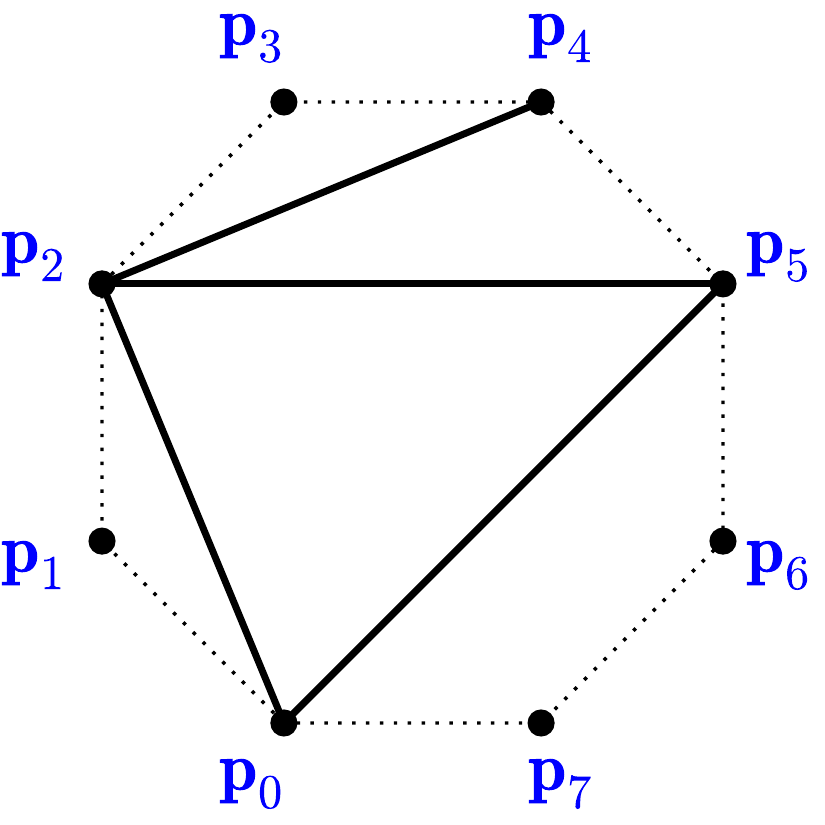}\qquad \includegraphics[scale=0.45]{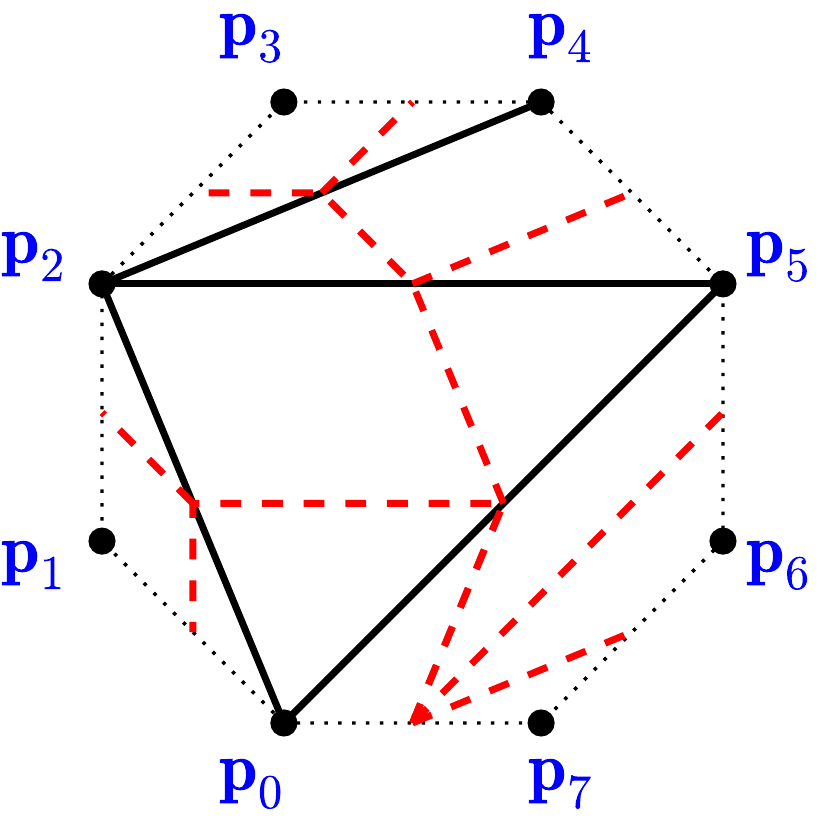}\quad \includegraphics[scale=0.45]{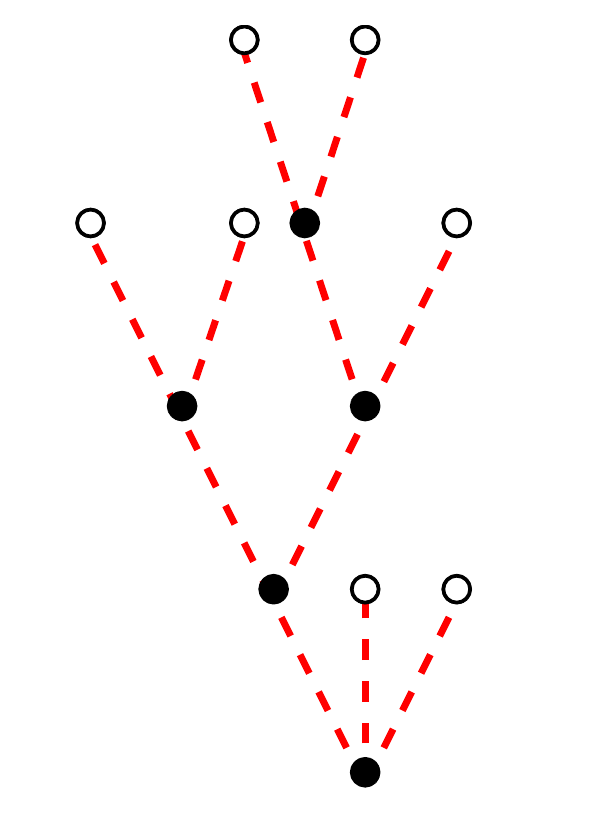}

\noindent Here leaves are drawn as circles and nonleaf vertices are drawn as discs.

Conversely, suppose we are given a plane tree with $m$ vertices of which $n$ are leaves, and all the other vertices have at least two descendants. Assign labels $(\pp_0,\pp_1),(\pp_1,\pp_2),$ $\ldots,(\pp_{n-1},\pp_n)$ to the leaves as they are encountered while traversing the tree depth-first from left to right. Recursively, we assign the label $(\pp_{i_0},\pp_{i_1},\dots, \pp_{i_r})$ to any nonleaf vertex whose descendants have labels of the form $(\pp_{i_0},\ldots,\pp_{i_1}), (\pp_{i_1},\ldots, \pp_{i_2}),\ldots,$ $(\pp_{i_{r-1}},\ldots,\pp_{i_r})$. The labels of the nonleaf vertices then coincide with the faces of a partition in $\PPP(\pp_0,\pp_1,\ldots,\pp_n)$, and the labels of the leaves correspond to the outer edges unequal to $(\pp_0,\pp_n)$ of the full polygon. The following picture illustrates this construction with an example.

\noindent \includegraphics[scale=0.45]{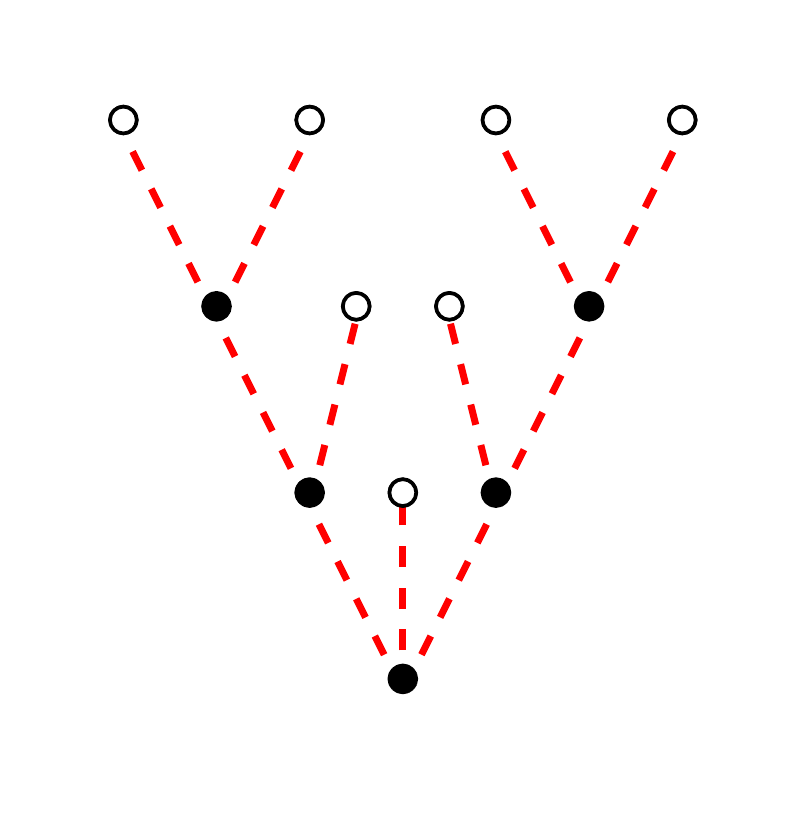}\quad \includegraphics[scale=0.45]{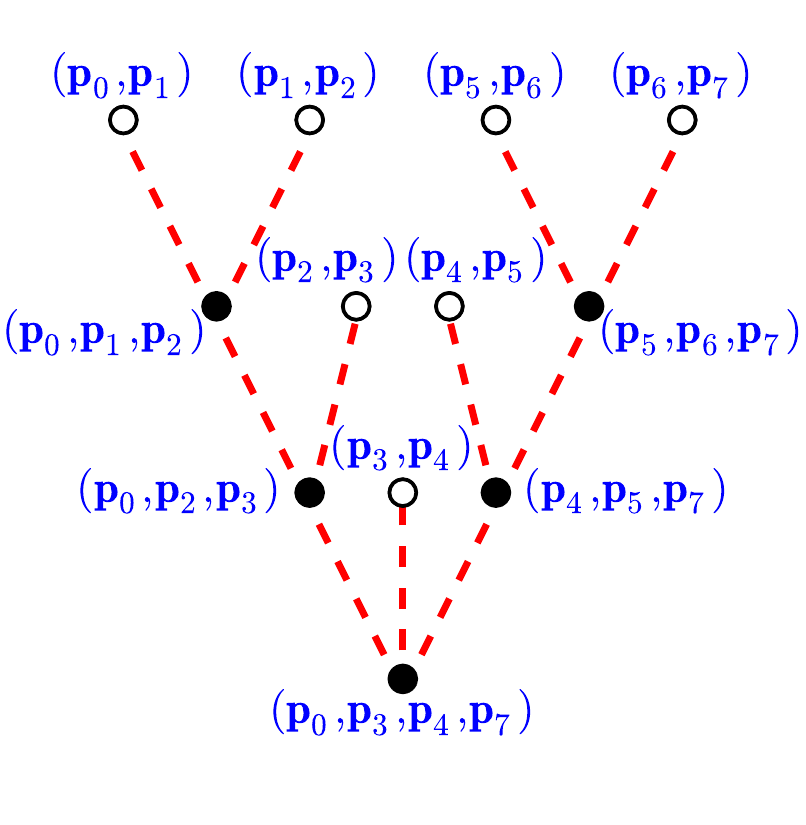}\quad \includegraphics[scale=0.45]{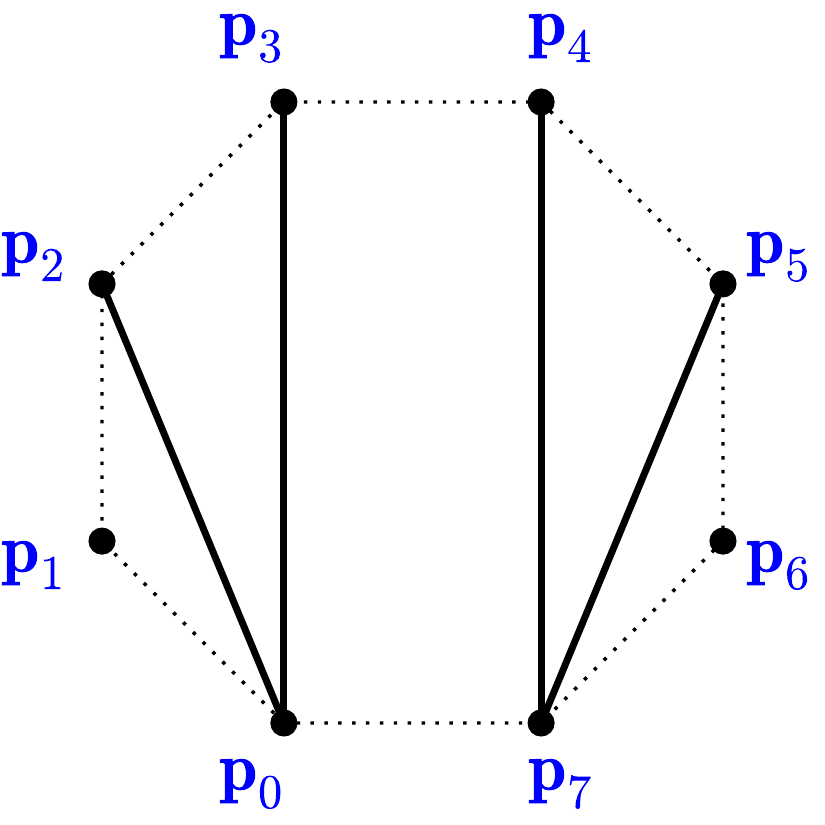}

Let $\TTT(\pp_0,\pp_1,\ldots,\pp_n)$ denote the set of plane trees with $n$ leaves for which all nonleaf vertices have at least two descendants. Represent each tree $\tau \in \TTT(\pp_0,\pp_1,\ldots,\pp_n)$ by its set $V(\tau)$ of \emph{nonleaf} vertices that are labeled as above, and label the leaves correspondingly. Note that $V(\tau) = F(\pi)$ whenever a tree $\tau$ and polygon partition $\pi$ are related via the above bijection. From this it follows that Equation \ref{eq:MainTheorem} can equivalently be stated in terms of plane trees as
\begin{equation}\tag{\ref{eq:MainTheorem}$^\prime$}\label{eq:MainTheorem2}
[\xx: \00,\nn ]y =
\sum_{\00 = \pp_0 < \pp_1 < \cdots < \pp_{|\nn|} = \nn}\ 
\sum_{\tau \in \TTT(\pp_0,\pp_1,\ldots,\pp_{|\nn|})}\ 
\prod_{(\ii_0,\ii_1,\ldots,\ii_k)\in V(\tau)} \{\ii_0 \ii_1\cdots \ii_k\}g.
\end{equation}

We wish to bring this equation into a form where we can distinguish the individual terms in the divided differences of $g$. For this, we replace $\TTT(\pp_0,\ldots,\pp_{|\nn|})$ by a structure $\TTT'(\pp_0,\ldots,\pp_{|\nn|})$ that encompasses all combinations of all terms in the expressions $\{\ii_0\cdots \ii_k\}g$. More precisely, every tree $\tau$ in $\TTT(\pp_0,\ldots,\pp_{|\nn|})$ will be replaced by several trees, one for each term in Equation \ref{eq:CurlyBrackets}, each of which is an extension of $\tau$ with an additional nonleaf vertex for every factor in the second line of Equation \ref{eq:CurlyBrackets}.

Let $\tau$ be one of the trees in $\TTT(\pp_0,\ldots,\pp_{|\nn|})$ corresponding to a polygon partition $\pi$. Any nonleaf vertex $\vv = (\ii_0, \ldots, \ii_k)$ in $V(\tau)$ defines, together with its direct descendants, a subtree $\star_\vv$ of $\tau$ called a \emph{[plane] star with root $\vv$}, in which we remember which descendants were (non)leaves. Note that the bijection of Lemma \ref{lem:DissectionsPlanarTrees} induces a bijection between $F(\pi)$ and the set $\Stars(\tau)$ of stars of nonleaf vertices of $\tau$. A star $\star$ is said to be \emph{of type} $(s_1^\star,\ldots, s_q^\star, t^\star) = (\ss^\star,t^\star)$, if the sequence of descendants of its root starts with
\begin{itemize}
\item[] $s_1^\star$ leaves with labels $(\aaa,\bb)$ satisfying $\bb - \aaa = \ee_1$, followed by
\item[] $s_2^\star$ leaves with labels $(\aaa,\bb)$ satisfying $\bb - \aaa = \ee_2$, followed by
\item[] $\ \vdots$ 
\item[] $s_q^\star$ leaves with labels $(\aaa,\bb)$ satisfying $\bb - \aaa = \ee_q$, followed by 
\item[] $t^\star$ nonleaves.
\end{itemize}
See Figure \ref{fig:Star0} for an example. Note that such a type does not exist for every star, as leaves can appear after nonleaves.

For every integer partition $k = s_1 + \cdots + s_q + t$ compatible with $\vv = (\ii_0,\ldots,\ii_k)$, we can extend $\star_\vv$ to a tree $\tau_{\vv}^{\ss,t}$ by inserting an edge at the leaves among the final $t$ descendants $(\ii_{|\ss|}, \ii_{|\ss| + 1}), (\ii_{|\ss| + 1}, \ii_{|\ss| + 2}), \ldots, (\ii_{|\ss|+t-1}, \ii_{|\ss| + t})$ of $\vv$. That is, we insert an edge for every factor in the second line of Equation \ref{eq:CurlyBrackets}. Note that if there are no such factors, then $\tau_\vv^{\ss,t} = \star_\vv$. Every star $\star$ in $\tau_\vv^{\ss,t}$ is then of some (necessarily unique) type $(\ss^\star,t^\star)$. Using these notions, one can write
\[ \{\ii_0\cdots\ii_k\}g = 
\sum_{\substack{\text{compatible}\\(\ss,t)}}\ 
\prod_{\star\, \in\, \Stars(\tau_{\vv}^{\ss,t})}\]
\[
\left(-\frac
{\big[i_0^{1\star}\cdots (i_0^{1\star} + s_1^\star) ;\ \cdots\ ; i_0^{q\star}\cdots (i_0^{q\star} + s_q^\star)\big| \ii_{|\ss^\star|}^\star\cdots \ii_{|\ss^\star|+t^\star}^\star\big]g}
{\big[i_0^{1\star};\ \cdots; i_0^{q\star}\big|\ii_0^\star \, \ii_{k^\star}^\star\big]g}
\right), \]
where each star $\star$ is of type $(\ss^\star,t^\star) = \big(s_1^\star,\ldots,s_q^\star,t^\star\big)$ and has root $\big(\ii_0^\star,\ldots,\ii_{k^\star}^\star\big)$, with 
\[ \big(i_0^{1\star},\ldots,i_0^{q\star}\big) = \ii_0^\star < \big(i_1^{1\star},\ldots,i_1^{q\star}\big) = \ii_1^\star < \cdots < \big(i_{k^\star}^{1\star},\ldots,i_{k^\star}^{q\star}\big) = \ii_{k^\star}^\star. \]

\begin{figure}
\begin{center}
\subfloat[]{\includegraphics[scale=0.65]{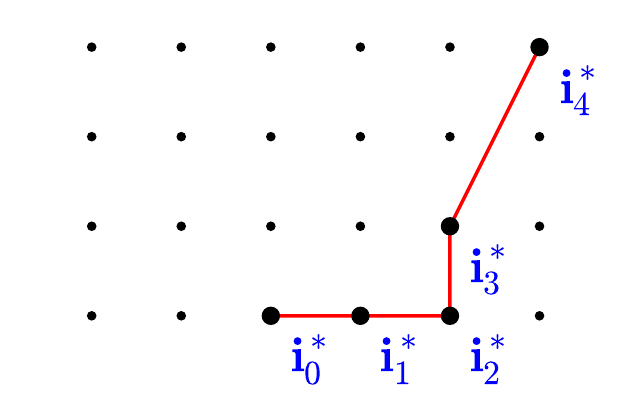}\label{fig:TypeA}}\qquad
\subfloat[]{\includegraphics[scale=0.65]{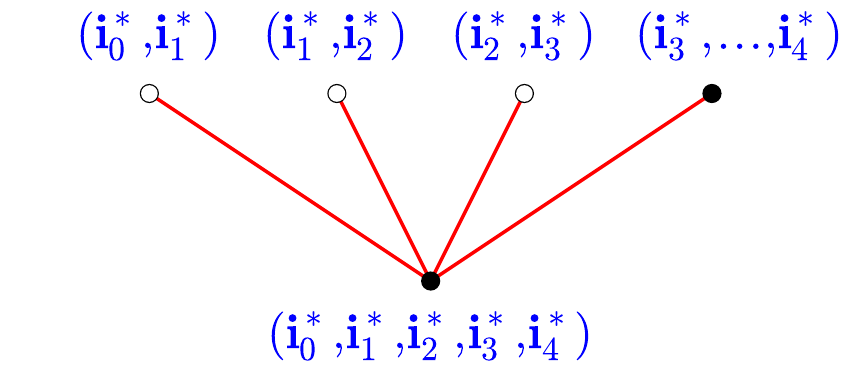}\label{fig:TypeB}}
\end{center}
\caption[A path and a corresponding star]{The figure to the left shows the path $(\ii_0^\star,\ii_1^\star,\ii_2^\star,\ii_3^\star,\ii_4^\star) = (2\ee_1, 3\ee_1, 4\ee_1, 4\ee_1 + \ee_2, 5\ee_1+3\ee_2)$.
The figure to the right shows a star $\star$ of type $(s_1^\star,s_2^\star,t^\star) = (2,1,1)$ with root $(\ii_0^\star,\ii_1^\star,\ii_2^\star,\ii_3^\star,\ii_4^\star)$, with the circles representing leaves and the discs nonleaves.}
\label{fig:Star0}
\end{figure}

Let $\TTT'(\pp_0,\ldots,\pp_{|\nn|})$ be the set of plane trees obtained by taking a plane tree in $\TTT(\pp_0,\ldots,\pp_{|\nn|})$ and replacing each of its stars $\star_\vv$ by $\tau_\vv^{\ss,t}$ for some $(\ss,t)$ compatible with $\vv$.
Equivalently, $\TTT'(\pp_0,\ldots,\pp_{|\nn|})$ is the set of plane trees with leaves $(\pp_0,\pp_1),(\pp_1,\pp_2),$ $\ldots,(\pp_{|\nn|-1}, \pp_{|\nn|})$ and nonleaves labeled accordingly, for which each star $\star$ is of some type $(\ss^\star,t^\star)\neq (\00,1)$.
Equation \ref{eq:MainTheorem2} can then be stated as
\begin{equation}\tag{\ref{eq:MainTheorem}$^{\prime\prime}$}\label{eq:MainTheorem3}
[\xx: \00,\nn ]y =
\sum_{\00 = \pp_0 < \pp_1 < \cdots < \pp_{|\nn|} = \nn}\ 
\sum_{\tau' \in \TTT'(\pp_0,\pp_1,\ldots,\pp_{|\nn|})}\ 
\prod_{\star\,\in\, \Stars(\tau')}
\end{equation}
\[ 
\left(-\frac
{[i_0^{1\star}\cdots (i_0^{1\star} + s_1^\star) ;\ \cdots\ ; i_0^{q\star}\cdots (i_0^{q\star} + s_q^\star)| \ii_{|\ss^\star|}^\star\cdots \ii_{|\ss^\star|+t^\star}^\star]g}
{[i_0^{1\star};\ \cdots; i_0^{q\star}|\ii_0^\star \, \ii_{k^\star}^\star]g}
\right), \]
where again each star $\star$ is of type $(\ss^\star,t^\star) = (s_1^\star,\ldots,s_q^\star,t^\star)$ and has root $(\ii_0^\star,\ldots,\ii_{k^\star}^\star)$, with 
\[ \big(i_0^{1\star},\ldots,i_0^{q\star}\big) = \ii_0^\star < \big(i_1^{1\star},\ldots,i_1^{q\star}\big) = \ii_1^\star < \cdots < \big(i_{k^\star}^{1\star},\ldots,i_{k^\star}^{q\star}\big) = \ii_{k^\star}^\star. \]

\begin{example}
To the path $(\pp_0,\pp_1,\pp_2,\pp_3) = (\00,\ee_1,\ee_1 + \ee_2, 2\ee_1 + \ee_2)$ correspond three trees in $\TTT(\pp_0,\pp_1,\pp_2,\pp_3)$,

\begin{center}
\begin{tabular}{ccc}
$\tau_1$ & $\tau_2$ & $\tau_3$\\
\includegraphics[scale=0.75]{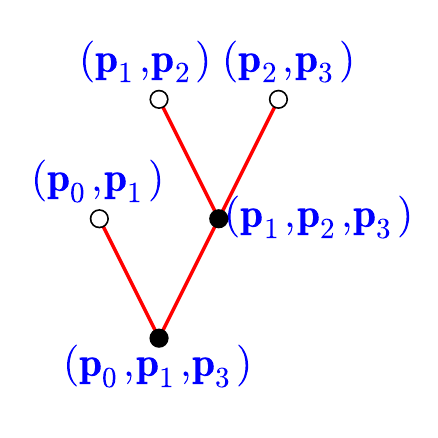} & 
\includegraphics[scale=0.75]{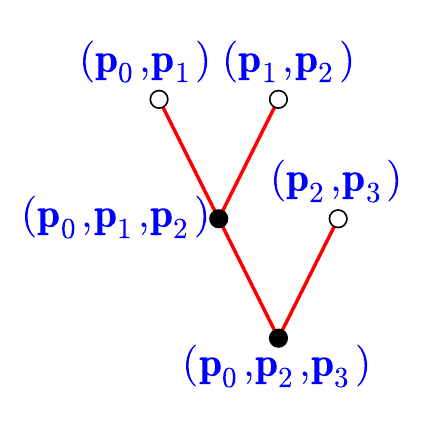} &
\includegraphics[scale=0.75]{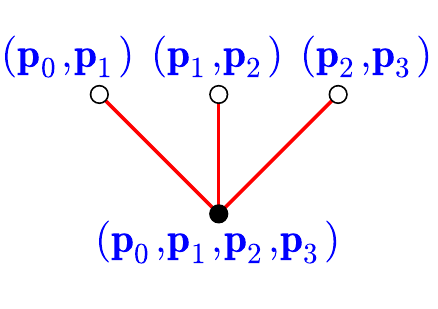}
\end{tabular}
\end{center}

Let us consider the first tree $\tau_1$. There are two tuples $(\ss_1,t_1) = (0,\ldots,0,2)$, $(\ss_2,t_2) = (1,0,\ldots,0,1)$ compatible with the nonleaf vertex $\vv = (\pp_0,\pp_1,\pp_3)$, and we can extend $\star_\vv$ to two corresponding trees

\medskip
\begin{center}
\begin{tabular}{cc}
$\tau_\vv^{\ss_1,t_1}$                & \qquad\qquad $\tau_\vv^{\ss_2,t_2} = \star_\vv$\\
\includegraphics[scale=0.75]{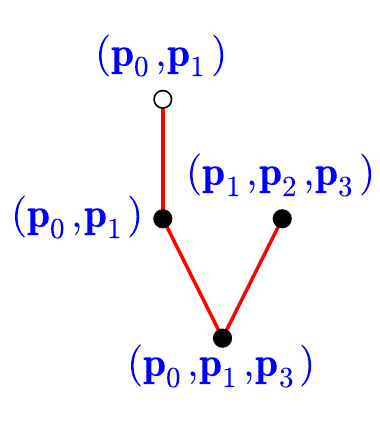} & \qquad\qquad \includegraphics[scale=0.75]{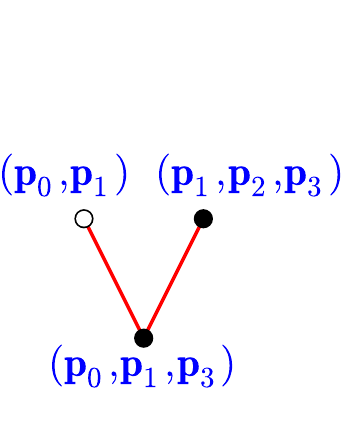}
\end{tabular}
\end{center}

\noindent Similarly, there are two tuples $(\ss_1',t_1') = (0,\ldots,0,2)$, $(\ss_2',t_2') = (0,1,0,\ldots,0,1)$ compatible with the other nonleaf vertex $\vv' = (\pp_1,\pp_2,\pp_3)$ of $\tau_1$, and we find two trees 

\begin{center}
\begin{tabular}{cc}
$\tau_{\vv'}^{\ss_1',t_1'}$ & $\tau_{\vv'}^{\ss_2',t_2'}$ \\
\includegraphics[scale=0.75]{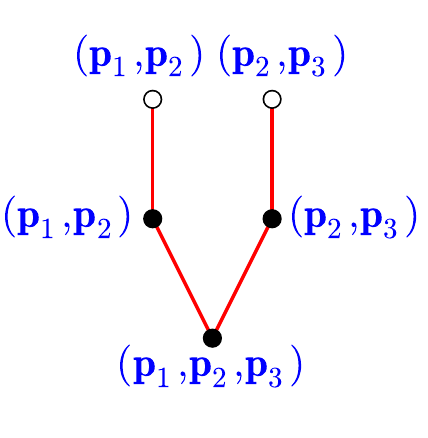} & \qquad\quad \includegraphics[scale=0.75]{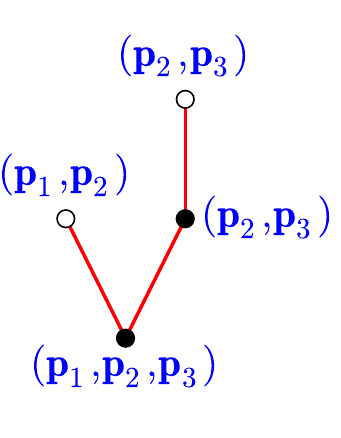}
\end{tabular}
\end{center}

\noindent corresponding to $\star_{\vv'}$. It follows that the tree $\tau_1$ yields $2\times 2 = 4$ different trees in $\TTT'(\pp_0,\pp_1,\pp_2,\pp_3)$. Analogously, one can check that $\tau_2$ yields $1\times 3$ trees and $\tau_3$ yields $3$ trees.
\end{example}

We end this section with a Lemma that appears as Theorem 5.3.10 in \cite{Stanley99} and is needed to compute the coefficients in Equation \ref{eq:Wilde}.
\begin{lemma}\label{lem:NumberPlanarTrees} For any $k\in \NN$, there are
\[ \frac{1}{r_0 + r_1 + \cdots + r_k }{r_0 + r_1 + \cdots + r_k\choose r_0,r_1,\ldots, r_k} \]
different plane trees with $r_0$ vertices with $0$ descendants (leaves), $r_1$ vertices with $1$ descendant, $\ldots$, $r_k$ vertices with $k$ descendants, and no vertices with more than $k$ descendants.
\end{lemma}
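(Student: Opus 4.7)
The plan is to prove Lemma \ref{lem:NumberPlanarTrees} via the classical depth-first degree encoding of plane trees (the Lukasiewicz word), combined with the cycle lemma of Dvoretzky and Motzkin; this is essentially the route taken in Stanley. Write $n := r_0 + r_1 + \cdots + r_k$ for the total number of vertices. To any plane tree $T$ on $n$ vertices I associate the sequence $\mathbf{d}(T) = (d_1, \ldots, d_n) \in \NN^n$, where $d_i$ records the number of descendants of the $i$-th vertex visited in a left-to-right depth-first traversal. Since each of the $n-1$ non-root vertices is a descendant of exactly one vertex, $\sum_i d_i = n-1$. Interpreting the partial sums $h_i := \sum_{j \le i}(d_j - 1)$ as one less than the number of subtrees still to be explored after step $i$, one verifies by induction on $n$ that $h_i \ge 0$ for $i < n$ while $h_n = -1$. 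Conversely, any sequence in $\NN^n$ satisfying these two conditions reconstructs a unique plane tree one vertex at a time. Under this bijection, trees in which exactly $r_j$ vertices have $j$ descendants correspond precisely to sequences containing $r_j$ entries equal to $j$.

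The set $S$ of all such linear arrangements has cardinality $\binom{n}{r_0, r_1, \ldots, r_k}$, with no partial-sum constraint yet imposed. To extract from $S$ the fraction that are valid Lukasiewicz sequences, I would invoke the cycle lemma: for any $\mathbf{d} \in S$, exactly one of the $n$ cyclic rotations of $\mathbf{d}$ satisfies $h_i \ge 0$ for all $i < n$. Concretely, letting $i^\ast$ be the last index in $\{1, \ldots, n\}$ at which the partial sum $h_i$ attains its minimum, the rotation starting at position $i^\ast + 1$ is the unique good one. Because $\sum_i d_i = n-1$ while $\gcd(n, n-1) = 1$, no element of $S$ admits a nontrivial cyclic symmetry, so cyclic rotation partitions $S$ into orbits of size exactly $n$, each contributing exactly one valid sequence. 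Via the bijection of the previous paragraph, the number of plane trees with the prescribed descendant distribution is therefore
\[ \frac{1}{n}\binom{n}{r_0, r_1, \ldots, r_k}, \]
as claimed.

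The step I expect to require the most care is the cycle lemma itself — specifically, the claim that the ``last minimum'' rotation is the unique good one. The verification is elementary but demands attention to the strict-versus-weak inequality at the boundary $i=n$ and to the tie-breaking rule for the minimum of $h_i$; without care one either obtains zero or more than one good rotation. The remaining ingredients (the bijection, the multinomial count, and the orbit decomposition of $S$ via the $\gcd$ argument) are then standard bookkeeping that follows directly.
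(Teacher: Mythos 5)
The paper does not prove this lemma; it quotes it from Theorem 5.3.10 of \cite{Stanley99}, whose proof is precisely the Lukasiewicz-word bijection plus cycle lemma that you use, so your argument essentially reconstructs the cited reference and is sound in outline: the encoding of plane trees by depth-first degree sequences $(d_1,\ldots,d_n)$ with $\sum d_i=n-1$ and nonnegative proper prefix sums, the count $\binom{n}{r_0,\ldots,r_k}$ of arrangements of the prescribed multiset, the observation that $\gcd(n,n-1)=1$ forces every rotation orbit to have size exactly $n$ (the uniqueness clause of the cycle lemma does the same job), and the resulting division by $n$.

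One detail --- exactly the one you yourself flagged as delicate --- is stated backwards. With the convention that $d_{i^\ast+1}$ becomes the first letter of the rotated word, the unique good shift starts after the \emph{first} index at which $h_i=\sum_{j\le i}(d_j-1)$ attains its minimum over $\{1,\ldots,n\}$, not the last. The prefix sums of the shifted word are $h_{i^\ast+j}-h_{i^\ast}$ on the tail, which only need to be $\ge 0$ (weak inequality to the right of $i^\ast$), and $h_\ell - h_{i^\ast}-1$ on the wrapped-around head for $\ell<i^\ast$, which require the strict inequality $h_\ell>h_{i^\ast}$ (to the left of $i^\ast$); the first minimizer supplies strictness on the left and weakness on the right, while the last minimizer supplies the reverse and fails. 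A counterexample to your version: $n=3$, $d=(0,2,0)$, $h=(-1,0,-1)$; the last minimizer is $i^\ast=3$ and the corresponding rotation is the original word, which is bad, whereas the first minimizer $i^\ast=1$ gives $(2,0,0)$ with prefix sums $1,0,-1$, which is good. Since the final count only invokes the cycle lemma's conclusion (exactly one good shift per orbit of size $n$), this slip does not affect the total $\frac{1}{n}\binom{n}{r_0,\ldots,r_k}$, but the tie-breaking rule as written would not survive being made precise.
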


\section{Implicit higher partial derivatives}\label{sec:ImplicitDerivatives}
Whenever $g$ and $y$ are sufficiently smooth, coalescing the grid points in Equation \ref{eq:MainTheorem3} results in a formula for the derivatives of $y$ in terms of the derivatives of $g$. We show that this formula generalizes a formula that appears as Equation 7 in \cite{Wilde08}, which corrects a misprint in an earlier formula by Comtet and Fiolet \cite{ComtetFiolet74}.

The formula, as stated in Equation \ref{eq:Wilde}, uses some notation for $(q+1)$-dimensional partitions. If $(\nn,m)\in \NN^q\times \NN$ is a nonzero tuple of nonnegative integers, then a \emph{$(q+1)$-dimensional partition} $p$ of $(\nn,m)$, denoted by $p\vdash (\nn,m)$, is a multiset $\big\{(\ss_1,t_1), \ldots,$ $ (\ss_r,t_r)\big\}$
of nonzero tuples in $\NN^q\times \NN$ that sum to $(\nn, m)$ when counting multiplicities. We let $|p| = r$ denote the number of terms in the partition $p$, counting the multiplicity $\mu_{p;\ss,t}$ of each tuple $(\ss,t)$ in $p$. 

Let $y$ be implicitly defined by $g$ as in Equation \ref{eq:ImplicitlyDefined}. We introduce the shorthands
\[
y_\nn = y_\nn (\xx) := \frac{\partial^{|\nn|} y}{\partial \xx^\nn}(\xx),\qquad 
g_{\ss,t} = g_{\ss,t} \big(\xx,y(\xx)\big) := \frac{\partial^{|\ss| + t} g}{\partial \xx^\ss y^t}\big(\xx, y(\xx)\big).
\]
As the multiplicities $\mu_{p;\ss,t}$ sum to $|p|$, the multinomial coefficient
\[ {|p|\choose \ldots,\mu_{p;\ss,t},\ldots} :=
\frac{|p|!}{\mu_{p;\ss_1,t_1}!\cdots \mu_{p;\ss_l,t_l}!}
\]
is well defined for any partition
\[
p = \{\underbrace{(\ss_1,t_1),\ldots,(\ss_1,t_1)}_{\mu_{p;\ss_1,t_1}},\ \cdots, \underbrace{(\ss_l,t_l),\ldots,(\ss_l,t_l)}_{\mu_{p;\ss_l,t_l}}\}.
\]

\begin{corollary}\label{cor:Wilde} If $y$ and $g$ are sufficiently smooth and related by Equation \ref{eq:ImplicitlyDefined}, then, for any $\xx\in U$ and nonzero $\nn\in \NN^q$,
\begin{equation}\label{eq:Wilde} \frac{y_\nn(\xx) }{\nn!} = 
\sum_{\substack{p\, \vdash\, (\nn, |p| - 1)\\ (\00,1)\notin p}}
\frac{1}{|p|} {|p| \choose \ldots,\mu_{p;\ss,t},\ldots }
\prod_{(\ss,t)\in p} \left( - \frac{1}{\ss ! t!} \frac{g_{\ss,t}\big(\xx,y(\xx)\big)}{g_{\00,1}\big(\xx,y(\xx)\big)} \right).
\end{equation}
\end{corollary}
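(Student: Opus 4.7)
The plan is to let all grid points in Equation \ref{eq:MainTheorem3} coalesce to a single point $\xx_0$, and then reorganize the resulting sum over (path, tree) pairs according to the multiset of star types.

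I send each $x^j_i\to x^j_0$ and each $y_\ii\to y(\xx_0)$. Using the coalescence formulas from Section \ref{sec:DividedDifferences}, the left-hand side of Equation \ref{eq:MainTheorem3} becomes $y_\nn(\xx_0)/\nn!$, while each numerator $[0\,1\cdots s_1^\star;\cdots;0\,1\cdots s_q^\star\mid \ii_{|\ss^\star|}^\star\cdots \ii_{|\ss^\star|+t^\star}^\star]g$ tends to $g_{\ss^\star,t^\star}(\xx_0,y(\xx_0))/(\ss^\star!\,t^\star!)$ and each denominator $[0;\cdots;0\mid \ii_0^\star\,\ii_{k^\star}^\star]g$ tends to $g_{\00,1}(\xx_0,y(\xx_0))$. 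Hence each star $\star$ of any tree $\tau'\in\TTT'(\pp_0,\ldots,\pp_{|\nn|})$ contributes the factor
\[
-\frac{1}{\ss^\star!\,t^\star!}\cdot\frac{g_{\ss^\star,t^\star}}{g_{\00,1}},
\]
which depends only on its type $(\ss^\star,t^\star)$.

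Consequently the product over all stars of $\tau'$ depends only on the multiset $p=\{(\ss^\star,t^\star):\star\in \Stars(\tau')\}$. This lets me reorganize the coalesced form of Equation \ref{eq:MainTheorem3} as a sum over such multisets. Each tree $\tau'$ contributes a multiset satisfying $\sum_\star \ss^\star=\nn$ (the number of leaves of direction $\ee_r$ equals the number $n_r$ of $\ee_r$-steps in the path) and $\sum_\star t^\star=|p|-1$ (a plane tree on $|p|$ vertices has exactly $|p|-1$ non-root vertices), i.e.\ $p\vdash(\nn,|p|-1)$; and the defining exclusion in $\TTT'$ forces $(\00,1)\notin p$. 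These are exactly the indexing conditions on $p$ in Equation \ref{eq:Wilde}.

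It remains to count, for each admissible $p$, the number of (path, tree) pairs whose star types form the multiset $p$. The key observation is that every such pair is uniquely encoded by an \emph{abstract typed plane tree}: an unlabeled plane tree with $|p|$ nonleaf vertices in which each nonleaf carries a type from $p$ with the prescribed multiplicities. Indeed, the assigned types determine each leaf's direction via its parent's $\ss$-component; reading the leaves depth-first recovers the path, and the labels on all vertices are then forced. I count typed plane trees in two steps. First, by passing to the subtree of nonleaves, Lemma \ref{lem:NumberPlanarTrees} yields $\tfrac{1}{|p|}\binom{|p|}{R_0,R_1,\ldots}$ such subtrees, where $R_j:=\sum_{\ss:(\ss,j)\in p}\mu_{p;\ss,j}$ is the total multiplicity in $p$ of types with $t$-component $j$. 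Second, at each nonleaf with $j$ nonleaf-children the assigned type must have $t$-component $j$, giving $\prod_j \binom{R_j}{\ldots,\mu_{p;\ss,j},\ldots}$ type assignments. Multiplying these and cancelling the $R_j!$ factors collapses to $\tfrac{1}{|p|}\binom{|p|}{\ldots,\mu_{p;\ss,t},\ldots}$, precisely the coefficient in Equation \ref{eq:Wilde}. The case $|\nn|=1$, not covered by the Main Theorem, follows immediately from Equation \ref{eq:RecursionFormulaInitial}. The principal obstacle is this combinatorial step: one must carefully verify the bijection with typed plane trees, and check the reduction of the two Lemma-style factors to the single multinomial coefficient in the stated formula.
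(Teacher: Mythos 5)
Your proposal is correct and follows essentially the same route as the paper: coalesce Equation \ref{eq:MainTheorem3} to get Equation \ref{eq:IntermediateWilde}, observe that each star's contribution depends only on its type so terms can be grouped by the multiset $p$ of types, establish that the terms are indexed precisely by multisets $p$ with $p\vdash(\nn,|p|-1)$ and $(\00,1)\notin p$, and then count the (path, tree) pairs yielding a given $p$ by Lemma \ref{lem:NumberPlanarTrees} (applied to the tree of nonleaves, which is the same device as the paper's contraction of each star to a single abstract vertex with $t^\star$ children) followed by the product of multinomials that distributes the $\ss$-components over vertices with a given number of nonleaf children. Your explicit treatment of $|\nn|=1$ via Equation \ref{eq:RecursionFormulaInitial} is a small but welcome addition, since the Main Theorem only covers $|\nn|\geq 2$.
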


Here the product is understood to be of $\mu_{p;\ss,t}$ copies for every distinct element $(\ss,t)$ of the multiset $p$. The self-referring nature of the summation makes it not directly obvious that there is only a finite number of partitions $p$ of this form for any $\nn$. Given such a partition $p$, only $a\leq |\nn|$ of its elements $(\ss,t)$ satisfy $\ss\neq \00$. Since $p$ does not have $(\00, 1)$ as an element, each of the final coordinates of the $b$ remaining elements of $p$ is at least two. Then $2b \leq |p| - 1 = a + b - 1$ implies that $p$ contains at most $|p| = a + b \leq 2a - 1\leq 2|\nn| - 1$ elements. This bound guarantees that any partition $p$ should sum to a tuple smaller than $(\nn, 2|\nn| - 1)$, implying that, for given $\nn$, there is but a finite number of multisets $p$ of nonzero tuples in $\NN^q \times \NN$ satisfying $p \vdash (\nn, |p| - 1)$ and $(\00, 1)\notin p$.

On the other hand, the existence of partitions of this form can be seen by taking simple examples. For example, for $q=2$ and $\nn = (1,0), (2,0), (1,1)$ one finds partitions

\begin{align}
\nn = (1,0): \ & \{(1,0,0)\} \vdash (1,0,0),\notag \\
\nn = (2,0): \ & \{(2,0,0)\} \vdash (2,0,0),\notag \\
                    & \{(1,0,1),(1,0,0)\} \vdash (2,0,1),\notag \\
                    & \{(0,0,2),(1,0,0),(1,0,0)\} \vdash (2,0,2),\notag \\
\nn = (1,1): \  & \{(1,1,0)\} \vdash (1,1,0),\label{eq:Partitions11} \\
                    & \{(1,0,0),(0,1,1)\},\  \{(0,1,0),(1,0,1)\} \vdash (1,1,1), \notag\\
                    & \{(1,0,0), (0,1,0), (0,0,2)\} \vdash (1,1,2). \notag
\end{align}
For these $\nn$, the Corollary claims
\begin{align}
  y_{1,0} = & - \frac{g_{1,0,0}}{g_{0,0,1}},\notag \\
  y_{2,0} = & - \frac{g_{2,0,0}}{g_{0,0,1}} + 2\frac{g_{1,0,1} g_{1,0,0}}{g_{0,0,1}^2} - \frac{g_{0,0,2} g_{1,0,0}^2}{g_{0,0,1}^3},\notag \\
  y_{1,1} = & - \frac{g_{1,1,0}}{g_{0,0,1}} + \frac{g_{1,0,0} g_{0,1,1}}{g_{0,0,1}^2} + \frac{g_{0,1,0} g_{1,0,1}}{g_{0,0,1}^2} - \frac{g_{1,0,0}g_{0,1,0}g_{0,0,2}}{g_{0,0,1}^3}. \label{eq:Wilde11}
\end{align}

Coalescing the grid to a single point $\xx_0$ in Equation \ref{eq:MainTheorem3}, one finds that
\begin{equation}\label{eq:IntermediateWilde}
\frac{y_\nn(\xx_0)}{\nn!} =
\sum_{\00 = \pp_0 < \pp_1 < \cdots < \pp_{|\nn|} = \nn}\ 
\sum_{\tau' \in \TTT'(\pp_0,\pp_1,\ldots,\pp_{|\nn|})}
\end{equation}
\[ \prod_{\star\,\in\, \Stars(\tau')} \left(-\frac{1}{\ss^\star!t^\star!}\frac{g_{\ss^\star,t^\star}\big(\xx_0,y(\xx_0)\big)}{\ \ g_{\00,1}\big(\xx_0,y(\xx_0)\big)} \right),\]
where each star $\star$ is of type $(\ss^\star, t^\star)$. Clearly Equations \ref{eq:Wilde} and \ref{eq:IntermediateWilde} are in a similar form. The only difference seems to be that in Equation \ref{eq:Wilde} equal terms are grouped together into one term with a coefficient. It is not surprising that there are duplicate terms in Equation \ref{eq:IntermediateWilde}, as each term depends only on the types of the stars, not on how these stars are connected to form a tree.

We first provide an example that introduces the flavor of the proof of Corollary \ref{cor:Wilde}.
\begin{example}
Let $\nn = (1, 1)$. Coalescing the grid to a single point $(x^1_0, x^2_0)$ in Equation \ref{eq:DivDiff11}, one finds
\begin{align}\label{eq:IntermediateWilde11}
y_{1,1} =\,& - \frac{1}{2} \frac{g_{0,0,2} g_{1,0,0} g_{0,1,0}}{g_{0,0,1}^3} + \frac{g_{1,0,1} g_{0,1,0}}{g_{0,0,1}^2} - \frac{g_{1,1,0}}{g_{0,0,1}} \\
         \,& - \frac{1}{2} \frac{g_{0,0,2} g_{0,1,0} g_{1,0,0}}{g_{0,0,1}^3} + \frac{g_{0,1,1} g_{1,0,0}}{g_{0,0,1}^2} \notag
\end{align}
at this point $(x^1_0, x^2_0)$. Clearly Equations \ref{eq:Wilde11} and \ref{eq:IntermediateWilde11} are equivalent.
In this example we hint at how the terms in these equations are related, suggesting a link that generalizes to the generic construction in the proof of Corollary \ref{cor:Wilde}.

First of all note that in both Equations \ref{eq:Wilde11} and \ref{eq:IntermediateWilde11} the denominators can be determined from their numerators. Taking for granted that the coefficients agree, it therefore suffices to check that, for these equations, the monomials of the numerators of their terms agree.

For each monomial in Equation \ref{eq:IntermediateWilde11}, the orders of the derivatives in the numerators form a multiset $p$ of triples in $\NN^2\times \NN$ with $(0,0,1)\notin p$ and $p\vdash (1,1,|p|-1)$. It follows that every monomial in Equation \ref{eq:IntermediateWilde11} appears in Equation \ref{eq:Wilde11} as well.

\begin{table}
\begin{center}
\begin{tabular}{c|c|c|c|c|c}
\hline
$p$     & \multicolumn{2}{|c|}{\small $\{(0,0,2),(1,0,0),(0,1,0)\}$}         & {\small $\{(0,1,1),(1,0,0)\}$}               & 
          {\small $\{(1,0,1),(0,1,0)\}$}                     & {\small $\{(1,1,0)\}$}                                       \\\hline
$\star$     & \multicolumn{2}{|c|}{\includegraphics[scale=0.55]{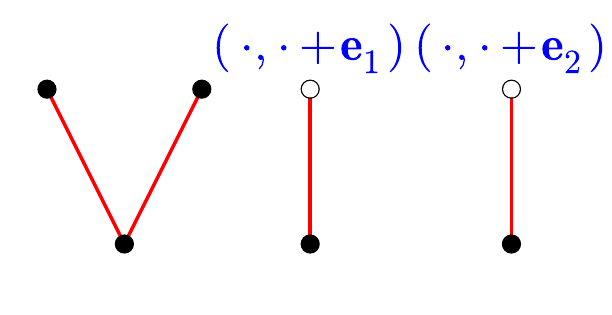}} & \includegraphics[scale=0.55]{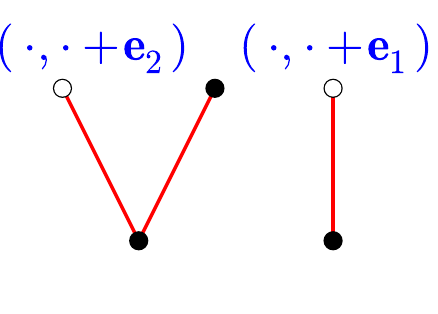} & 
          \includegraphics[scale=0.55]{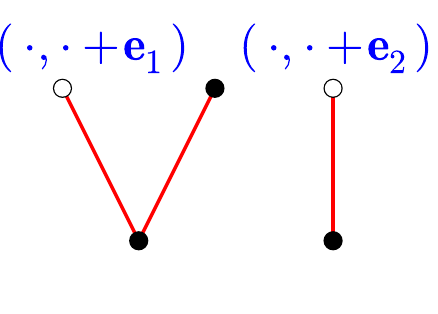}       & \includegraphics[scale=0.55]{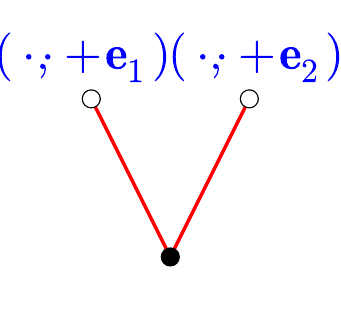}                 \\\hline
$\tau'$ & $\tau'_1$ & $\tau'_2$ & $\tau'_3$ & $\tau'_4$ & $\tau'_5$ \\
        & \includegraphics[scale=0.55]{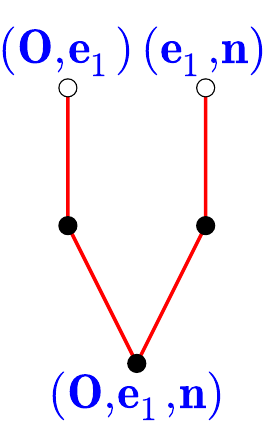}  &
          \includegraphics[scale=0.55]{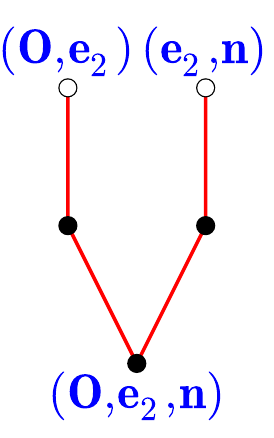} & \includegraphics[scale=0.55]{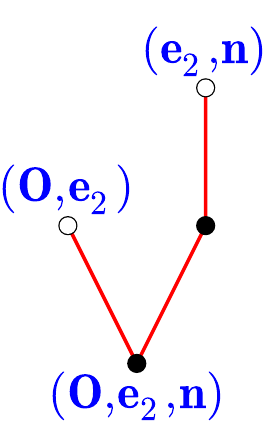}              &
          \includegraphics[scale=0.55]{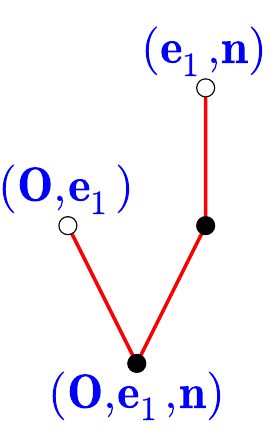}    & \includegraphics[scale=0.55]{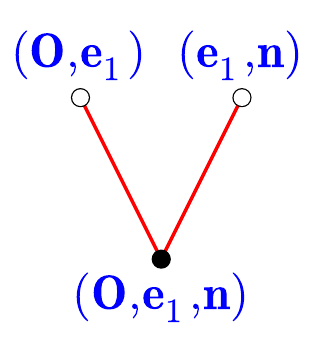}  \\
\hline
\end{tabular}
\end{center}
\caption[Multisets, stars, and plane trees]{For $\nn = (1,1)$, the first row lists the multisets $p$ with $(0,0,1)\notin p$ and $p\vdash (\nn,|p| - 1)$. The second row depicts the stars $\star$ associated to each of these multisets. The third row shows the different trees $\tau'$ that can be formed by connecting these stars, together with the labels of their vertices.}
\label{tab:AssemblingTrees11}
\end{table}

Conversely, we show that every monomial in Equation \ref{eq:Wilde11} appears in Equation \ref{eq:IntermediateWilde11} as well, by pointing out which paths $(0,0) = \pp_0 < \pp_1 < \pp_2 = (1,1)$ and trees $\tau'\in \TTT'(\pp_0,\pp_1,\pp_2)$ correspond to it. For a given monomial in Equation \ref{eq:Wilde11}, let $p$ be the corresponding partition in Equation \ref{eq:Partitions11}. To each triple $(s_1,s_2,t)$ in $p$ we associate a star of type $(s_1,s_2,t)$. As can be seen in Table \ref{tab:AssemblingTrees11}, one can, for each multiset $p$, connect these stars to a plane tree $\tau'$ with two leaves, and sometimes there are several ways to do this.
After demanding the first coordinate of the label of the left leaf to be $(0,0)$, there is only one way to label the leaves of $\tau'$ that agrees with the types of stars of $\tau'$. Thus we find three trees $\tau'_1,\tau'_4,\tau'_5\in \TTT'\big((0,0),(1,0),(1,1)\big)$ and two trees $\tau'_2,\tau'_3\in \TTT'\big((0,0),(0,1),(1,1)\big)$, each of which corresponds to a term in Equation \ref{eq:IntermediateWilde11}.
\end{example}

\begin{proof}[of Corollary \ref{cor:Wilde}]
In both Equation \ref{eq:Wilde} and \ref{eq:IntermediateWilde}, each term comprises some coefficient and a monomial in the symbols $g_{\ss,t}$ divided by a power of $g_{\00,1}$ of the same total degree. As this monomial uniquely determines the denominator, the following three steps suffice to show that Equations \ref{eq:Wilde} and \ref{eq:IntermediateWilde} are equivalent.

\emph{1. Every term in Equation \ref{eq:IntermediateWilde} appears also in Equation \ref{eq:Wilde}.}
Consider an arbitrary term $T$ in the right hand side of Equation \ref{eq:IntermediateWilde}. This term arises from picking a path $\00 = \pp_0 < \cdots < \pp_{|\nn|} = \nn$ and a tree $\tau' \in \TTT'(\pp_0,\ldots,\pp_{|\nn|})$. Let $p = \big\{(\ss_1,t_1),\ldots,(\ss_{|p|}, t_{|p|})\big\}$ be the multiset of types of the stars in $\tau'$. That is, $p$ is the multiset of orders of the derivatives in the numerator of $T$.

Since for any tree in $\TTT'(\pp_0,\ldots,\pp_{|\nn|})$ the steps made by its leaves sum to $\nn$, it follows that $\ss_1 + \cdots + \ss_{|p|} = \nn$. Moreover, with the exception of the root of $\tau'$, the root of each star in $\Stars(\tau')$ connects to one of the $t_1 + \cdots + t_{|p|}$ nonleaf descendants of the stars, implying that $t_1 + \cdots + t_{|p|} = |p| - 1$. As $\tau'$ has no vertices with precisely one nonleaf descendant, none of the types in $p$ can be equal to $(\00,1)$. The orders of the derivatives in the numerator of Equation \ref{eq:IntermediateWilde} therefore constitute a multiset $p$ with elements in $\NN^{q}\times \NN$ for which $(\00,1)\notin p$ and $p \vdash (\nn,|p| - 1)$. We conclude that, up to coefficients, each term in Equation \ref{eq:IntermediateWilde} appears as a term in Equation \ref{eq:Wilde} as well.

\emph{2. Every term in Equation \ref{eq:Wilde} appears also in Equation \ref{eq:IntermediateWilde}.} Suppose we are given a multiset $p = \{(\ss_1,t),\ldots,(\ss_{|p|}, t_{|p|})\}$ of tuples in $\NN^q\times \NN$ satisfying $(\00,1)\notin p$ and $p \vdash (\nn, |p| - 1)$ as in Equation \ref{eq:Wilde}. 
To each element $(\ss,t)$ of $p$, we associate a star of type $(\ss,t)$ whose labels are yet to be determined. Because the number of stars $|p|$ is one more than the sum of the nonleaf descendants $t_1 + \cdots + t_{|p|}$, one can always connect these stars to a plane tree $\tau'$ with $|\ss_1| + \cdots + |\ss_{|p|}| = |\nn|$ leaves.
Moreover, since each star in $\Stars(\tau')$ is of some type unequal to $(\00,1)$, there is a unique path $\00 = \pp_0 < \cdots < \pp_{|\nn|} = \nn$ and corresponding labeling of the leaves of $\tau'$ such that $\tau' \in \TTT'(\pp_0,\ldots,\pp_{|\nn|})$.
We conclude that, up to coefficients, each term in Equation \ref{eq:Wilde} appears as a term in Equation \ref{eq:IntermediateWilde} as well.

\emph{3. Corresponding terms have equal coefficients}.
Now we have shown that the monomials in Equation \ref{eq:Wilde} are the same as those in Equation \ref{eq:IntermediateWilde}, it remains to show that their coefficients agree. Every term in Equation \ref{eq:IntermediateWilde} corresponding to a tree $\tau'\in \TTT'(\pp_0,\ldots,\pp_{|\nn|})$, with $p$ the multiset of the types of the stars of $\tau'$, will contribute $(-1)^{\# \Stars(\tau')} = (-1)^{|p|}$ to the term in Equation \ref{eq:Wilde} corresponding to $p$. The coefficients can be shown to agree, therefore, by counting, for every multiset $p$ with $(\00,1)\notin p \vdash(\nn,|p|-1)$, the number of different plane trees that can be formed by connecting the stars of types corresponding to the elements of $p$.

Let $p$ be as in Equation \ref{eq:Wilde}. Let us call two elements $(\ss_i, t_i),$ $(\ss_j, t_j)$ of $p$ \emph{equivalent} whenever $t_i = t_j$. The equivalence classes form a new multiset $p'$ in which each element $[(\ss_i,t)]$, or simply $t$ for short, has multiplicity $\mu_t := \sum_{\ss\geq \00} \mu_{p;\ss,t}$.
Clearly $p'$ has the same number of elements as $p$. Associate with each $t\in p'$ a star with $t$ descendants. Here we think of a leaf as a star with $0$ descendants. By Lemma \ref{lem:NumberPlanarTrees}, one can construct 
\[ \frac{1}{|p'|}{|p'| \choose \mu_0, \mu_1,\ldots} \]
different plane trees from these stars. For the vertices with $t$ descendants of any such plane tree, the multinomial coefficient of $\{ \mu_{p;\ss,t} : \ss\geq \00 \}$ gives the number of ways to reinsert the leaves. It follows that we can form
\[ \frac{1}{|p'|}{|p'|\choose \mu_0,\mu_1,\ldots} \prod_{t\geq 0} {\mu_t\choose \ldots, \mu_{p;\ss,t},\ldots} = \frac{1}{|p|}{|p|\choose \ldots,\mu_{p;\ss,t},\ldots} \]
different plane trees from the stars corresponding to $p$. This agrees with the coefficient in Equation \ref{eq:Wilde}. We conclude that Equations \ref{eq:Wilde} and \ref{eq:IntermediateWilde} are equivalent.
\end{proof}

\section*{Acknowledgments}
I am grateful to Paul Kettler, Claudine Mitschi, Ragni Piene, and the anonymous referee, for kindly providing many useful comments on a draft of this paper.

\section*{References}

\bibliographystyle{amsxport}
\begin{biblist}
\bib{Boor05}{article}{
   author={de Boor, C.},
   title={Divided differences},
   journal={Surv. Approx. Theory},
   volume={1},
   date={2005},
   pages={46--69 (electronic)},
}

\bib{ComtetFiolet74}{article}{
   author={Comtet, L.},
   author={Fiolet, M.},
   title={Sur les d\'eriv\'ees successives d'une fonction implicite},
   language={French},
   journal={C. R. Acad. Sci. Paris S\'er. A},
   volume={278},
   date={1974},
   pages={249--251},
}

\bib{ConstantineSavits96}{article}{
   author={Constantine, G. M.},
   author={Savits, T. H.},
   title={A multivariate Fa\`a di Bruno formula with applications},
   journal={Trans. Amer. Math. Soc.},
   volume={348},
   date={1996},
   number={2},
   pages={503--520},
}

\bib{FaaDiBruno57}{article}{
   author={Fa\`a di Bruno, C. F.},
   title={Note sur une nouvelle formule de calcul diff\'erentiel},
   journal={Quarterly J. Pure Appl. Math.},
   volume={1},
   date={1857},
   pages={359--360},
}

\bib{Floater10}{article}{
   author={Floater, M. S.},
   title={A chain rule for multivariate divided differences},
   journal={BIT},
   volume={50},
   date={2010},
   number={3},
   pages={577--586},
}

\bib{FloaterLyche08}{article}{
   author={Floater, M. S.},
   author={Lyche, T.},
   title={Divided differences of inverse functions and partitions of a
   convex polygon},
   journal={Math. Comp.},
   volume={77},
   date={2008},
   number={264},
   pages={2295--2308},
}

\bib{Johnson02}{article}{
   author={Johnson, W. P.},
   title={The curious history of Fa\`a di Bruno's formula},
   journal={Amer. Math. Monthly},
   volume={109},
   date={2002},
   number={3},
   pages={217--234},
}

\bib{MuntinghFloater11}{article}{
  author={Muntingh, G.},
  author={Floater, M. S.},
  title={Divided differences of implicit functions},
  journal={Math. Comp.},
  volume={80},
  date={2011},
  pages={2185--2195},
}

\bib{Stanley97}{book}{
   author={Stanley, R. P.},
   title={Enumerative combinatorics. Vol. 1},
   series={Cambridge Studies in Advanced Mathematics},
   volume={49},
   note={With a foreword by Gian-Carlo Rota;
   Corrected reprint of the 1986 original},
   publisher={Cambridge University Press},
   place={Cambridge},
   date={1997},
   pages={xii+325},
}

\bib{Stanley99}{book}{
   author={Stanley, R. P.},
   title={Enumerative combinatorics. Vol. 2},
   series={Cambridge Studies in Advanced Mathematics},
   volume={62},
   note={With a foreword by Gian-Carlo Rota and appendix 1 by Sergey Fomin},
   publisher={Cambridge University Press},
   place={Cambridge},
   date={1999},
   pages={xii+581},
}

\bib{Wilde08}{article}{
   author={Wilde, T.},
   title={Implicit higher derivatives, and a formula of Comtet and Fiolet}
   date={2008-05-17}
   eprint={http://arxiv.org/abs/0805.2674v1}
}

\end{biblist}
\end{document}